\newtheoremstyle{mystyle}
  {}{}{\itshape}{}{\bfseries}{.}{ }{\thmnumber{#2}}
\theoremstyle{mystyle}
\newtheorem{theorem}{Theorem}[section]
\newtheorem{corollary}{Corollary}[theorem]
\newtheorem{lemma}[theorem]{Lemma}
\begin{document}

\title{Properties of the cumulated\\ deficient binary digit sum}
\date{march 2019}
\author[*]{Thomas Baruchel}
\affil[*]{\small \'Education nationale, France\authorcr\texttt{baruchel@riseup.net}}
\maketitle

\begin{abstract}
    The sequence \texttt{A268289} from the \textit{On-Line Encyclopedia of Integer Sequences}, namely the cumulated differences between the number of digits~$1$ and the number of digits~$0$ in the binary expansion of consecutive integers, is studied here. This sequence happens to match a sequence of cardinalities of some specific sets. Furthermore, it can also be expressed by using the Takagi function. The three different definitions have their own properties and combining them together lead to some new identities.
\end{abstract}

\section{Introduction}
%
The sequence \texttt{A268289} in the \textit{On-Line Encyclopedia of Integer Sequences} \cite{oeis} may be referred to as the cumulated deficient binary digit sum; each term of index~$n$ is the difference between the total number of digits~$1$ and the total number of digits~$0$ when writing down integers starting from~$1$ up to~$n$. Thus, $\texttt{A268289}_5=3$ because the digit~$1$ occurs $7$ times, while the digit~$0$ occurs $4$ times, in the following expansions: $1_2, 10_2, 11_2, 100_2, 101_2$.

These terms are known to be related to the $\tau$ Takagi function, introduced by T.~Takagi in 1901~\cite{takagi}. An extensive survey of its properties has been published recently by Jeffrey~C. Lagarias~\cite{lagarias}, and the section~9 of this work gives some formulas (from 1949 and 1968) using this~$\tau$ function for computing \emph{binary digit sums} directly related to the sequence \texttt{A268289}.

Furthermore, in a previously published paper studying Karatsuba's algorithm --- see formula (5) at the end of the section 4 in \cite{baruchel} --- was defined the following sequence of sets gathering some specific nodes in a recursion tree:
\begin{equation}\label{def_sets}
    S_n = \left\{  m \,\Big| \, 1\leqslant m\leqslant n,  \left(\left(n-m\right) \,\textrm{mod}\,\, 2^{\lfloor\log_2 m\rfloor+1}\right) < 2^{\lfloor\log_2 m\rfloor}\right\}
    \,\textrm{.}
\end{equation}
The sequence of cardinalities $\vert S_d\vert$ was then empirically found to be the sequence \texttt{A268289}. It was beyond the purpose of this paper to investigate further such an identity, which was left there as a mere conjecture. The identity~\ref{identity} below will prove that both sequences are actually identical.

The purpose of the current paper is to gather several new identities ---~mostly related to the cumulated deficient binary digit sum~--- coming from combining the properties of \texttt{A268289}, the $\tau$~function and the new sequence of sets~$S_n$.

\section{Preliminary identities}
\begin{lemma}\label{lemma2}
    Let $n$ be some positive integer and $\tau$ the Takagi function; then
\[
    {\normalfont\texttt{A268289}}_n =
       \left(n+1\right)\left(m - k + 1\right) - \left(2 + \tau(\xi)\right) 2^m  + 2^{k+1} - 1
\]
with $k=\lfloor\log_2(n)\rfloor$, $m=\lfloor\log_2(n+1)\rfloor$, $\xi=(n+1)2^{-m}-1$.
\end{lemma}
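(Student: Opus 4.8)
The plan is to separate the deficient digit sum into a count of ones and a count of total digits. First I would write, for each $j$, the deficient digit sum as $2s_1(j) - d(j)$, where $s_1(j)$ is the number of ones in the binary expansion of $j$ and $d(j) = \lfloor\log_2 j\rfloor + 1$ is its number of digits. Summing over $1 \le j \le n$ gives
\[
{\normalfont\texttt{A268289}}_n = 2\Sigma(n) - L(n), \qquad \Sigma(n) = \sum_{j=1}^n s_1(j), \quad L(n) = \sum_{j=1}^n d(j).
\]
The term $L(n)$ is elementary: grouping the integers by their number of digits and evaluating the series $\sum_t (t+1)2^t$ yields the closed form $L(n) = (n+1)(k+1) - 2^{k+1} + 1$ with $k = \lfloor\log_2 n\rfloor$. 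This already produces the $2^{k+1} - 1$ tail and part of the linear term in the statement.

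The heart of the argument is a closed form for $\Sigma(n) = G(n+1)$, where $G(P) = \sum_{0 \le j < P} s_1(j)$. I would count the contribution of each binary position separately: the number of $j \in [0,P)$ whose bit $i$ is set equals $c_i = \tfrac{P}{2} - 2^i\,\|P/2^{i+1}\|$, where $\|\cdot\|$ denotes distance to the nearest integer, the correction term being a tent/sawtooth. Since $c_i = 0$ for $i > m := \lfloor\log_2 P\rfloor$, summing gives $G(P) = \tfrac{(m+1)P}{2} - \sum_{i=0}^m 2^i\|P/2^{i+1}\|$.

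Next I would reindex the remaining sum by $j = m - i$ and factor out $2^m$, so the summand becomes $2^{-j}\,\|2^{j-1}(1+\xi)\|$ with $\xi = P\,2^{-m} - 1 \in [0,1)$. Using that $2^{j-1}$ is an integer for $j \ge 1$ reduces the tail to $\tfrac12\sum_{l=0}^{m-1} 2^{-l}\|2^l\xi\|$, while the $j=0$ term is handled separately via $\|(1+\xi)/2\| = (1-\xi)/2$. The crucial observation is that $\xi = (P - 2^m)2^{-m}$ is dyadic with denominator $2^m$, so $\|2^l\xi\| = 0$ for all $l \ge m$; hence the truncated series equals the full Takagi series $\tau(\xi)$ exactly. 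This gives $\sum_{i=0}^m 2^i\|P/2^{i+1}\| = 2^{m-1}(1-\xi) + 2^{m-1}\tau(\xi)$, and therefore $G(P) = \tfrac{(m+1)P}{2} - 2^{m-1}(1-\xi) - 2^{m-1}\tau(\xi)$.

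Finally I would substitute $P = n+1 = 2^m(1+\xi)$ into $2G(n+1) - L(n)$ and simplify. The book-keeping between $k$ and $m$ — which coincide unless $n+1$ is a power of two, where $m = k+1$ — is exactly what produces the factor $(m-k+1)$, while the $2^{m-1}\xi$ pieces account for the shift between $\tfrac{(m+1)P}{2}$ and $\tfrac{(m+2)P}{2}$, leaving the stated expression. I expect the main obstacle to be the Takagi-series step: justifying the reindexing, isolating the $j=0$ term correctly, and above all exploiting the dyadic nature of $\xi$ so that the infinite Takagi series collapses to a finite \emph{exact} value rather than an asymptotic approximation.
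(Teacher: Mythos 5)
Your proof is correct and follows essentially the same route as the paper: the paper's own ``proof'' simply invokes Theorem~9.1 of Lagarias (the Trollope--Delange closed form for $\sum_{0\le j<N}s_1(j)$ in terms of $\tau$) together with the elementary digit-length sum, and your bit-position counting argument with the distance-to-nearest-integer kernel is precisely a self-contained derivation of that cited theorem. The final bookkeeping does go through: substituting $2^m\xi=n+1-2^m$ into $2G(n+1)-L(n)$ gives $(n+1)(m-k+1)-(2+\tau(\xi))2^m+2^{k+1}-1$ exactly.
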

\begin{proof}
The sequence \texttt{A268289} is known to be closely related to the Takagi function. While not currently present in the online description of the sequence \texttt{A268289}, this formula is easy to build with the help of the theorem~9.1 in \cite{lagarias} as actually stated in a comment of the sequence.
\end{proof}

\begin{lemma}\label{lemma1}
    Let $k$ and $n$ be some positive integers such than $n<2^k$; then
\[
    {\normalfont\texttt{A268289}}_{n+2^k}
    = {\normalfont\texttt{A268289}}_n
        + \left(n+1\right) \left(\lfloor\log_2(n)\rfloor-k+2\right) + 2^k - 2^{\lfloor\log_2(n)\rfloor+1}
        \,\textrm{.}
\]
\end{lemma}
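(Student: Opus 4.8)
The plan is to evaluate both $\texttt{A268289}_{n+2^k}$ and $\texttt{A268289}_n$ with the closed form of Lemma~\ref{lemma2} and to show that their difference collapses to the stated expression. Write $N=n+2^k$. Since $n<2^k$ forces $2^k\le N<2^{k+1}$, we have $\lfloor\log_2 N\rfloor=k$, and since $n+1\le 2^k$ the quantity $\lfloor\log_2(N+1)\rfloor$ equals $k$ when $n<2^k-1$ and jumps to $k+1$ in the single boundary case $n=2^k-1$. I would dispose of that boundary case first: there both Takagi arguments vanish, so Lemma~\ref{lemma2} reduces to a short numerical check that both sides equal $2^k$.

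For the generic case I abbreviate $p=\lfloor\log_2 n\rfloor$ and $q=\lfloor\log_2(n+1)\rfloor$, so that $\xi=(n+1)2^{-q}-1\in[0,1)$. Applying Lemma~\ref{lemma2} at $N$ with $\lfloor\log_2 N\rfloor=\lfloor\log_2(N+1)\rfloor=k$, a one-line simplification gives $\texttt{A268289}_{N}=N-2^k\tau(\xi')$, where $\xi'=(N+1)2^{-k}-1=(n+1)2^{-k}$. The key relation is $\xi'=(1+\xi)\,2^{-(k-q)}$, so everything hinges on pulling $\tau(\xi')$ back to $\tau(\xi)$.

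This Takagi reconciliation is the main obstacle, because $1+\xi\in[1,2)$ lies outside $[0,1]$ and the halving rule cannot be applied to it directly. I would first use symmetry to rewrite the initial halving, $\tau\bigl((1+\xi)/2\bigr)=\tau\bigl((1-\xi)/2\bigr)=\tfrac{1-\xi}{2}+\tfrac12\tau(\xi)$, and then apply the halving identity $\tau(x/2)=\tfrac{x}{2}+\tfrac12\tau(x)$, in its iterated closed form $\tau(x/2^{j})=2^{-j}\tau(x)+j\,x\,2^{-j}$, for the remaining $k-q-1$ halvings. This yields
\[
    \tau(\xi')=\frac{1}{2^{\,k-q}}\Bigl[\tau(\xi)+(k-q)+(k-q-2)\,\xi\Bigr].
\]
Multiplying by $2^k$ turns the leading term into $2^{q}\tau(\xi)$, which is precisely the Takagi contribution that Lemma~\ref{lemma2} produces at $n$, so the two $\tau(\xi)$ terms cancel when the difference $\texttt{A268289}_{N}-\texttt{A268289}_n$ is formed.

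What remains is routine algebra. I would substitute $\xi=(n+1)2^{-q}-1$ to eliminate $\xi$, collect the powers of two (the $2^{q+1}$ contributions cancel), and gather the coefficient of $(n+1)$, which simplifies to $p-k+2=\lfloor\log_2 n\rfloor-k+2$; the surviving constant is $2^k-2^{p+1}=2^k-2^{\lfloor\log_2 n\rfloor+1}$. This reproduces the claimed identity, and a reassuring consistency check is that $q$ disappears entirely from the final answer, as it must.
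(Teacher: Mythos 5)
Your proof is correct, but it takes a genuinely different route from the paper. The paper proves this lemma by direct enumeration of binary digits: it computes $\texttt{A268289}_{2^k-1}=2^k-1$ and $\texttt{A268289}_{2^k}=2^k-k$, then observes that the expansions of $2^k+1,\dots,2^k+n$ are those of $1,\dots,n$ with an extra leading $1$ and some padding zeros, so only those leading digits need to be accounted for. You instead evaluate both sides with the closed form of Lemma~\ref{lemma2} and reconcile the two Takagi arguments via the classical functional equations $\tau(x)=\tau(1-x)$ and $\tau(x/2)=x/2+\tau(x)/2$; I checked your key intermediate identity $\tau\bigl((1+\xi)2^{-j}\bigr)=2^{-j}\bigl[\tau(\xi)+j+(j-2)\xi\bigr]$, the reduction $\texttt{A268289}_N=N-2^k\tau(\xi')$, and the final cancellation of $q=\lfloor\log_2(n+1)\rfloor$, and all of it is sound (the boundary case $n=2^k-1$, where $\lfloor\log_2(N+1)\rfloor$ jumps to $k+1$, is rightly split off). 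The trade-off is architectural: the paper's enumerative proof is elementary and, importantly, independent of Lemma~\ref{lemma2}, which is exactly what allows the paper to later combine Lemmas~\ref{lemma1} and~\ref{lemma2} to \emph{derive} the functional identity of Theorem~\ref{main}. Your intermediate Takagi identity is precisely Theorem~\ref{main}, obtained directly from Lagarias's Theorem~4.1; so your argument is not circular (it relies only on external standard facts, which the paper itself invokes elsewhere), but it inverts the paper's logical flow and would make the paper's subsequent proof of Theorem~\ref{main} redundant rather than a new consequence of the digit-counting lemma. It also inherits whatever burden attaches to Lemma~\ref{lemma2}, whose proof the paper only sketches.
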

\begin{proof}
    This is proved by enumerative means: we find that $\texttt{A268289}_{2^k-1}=2^k-1$ by studying separately each length of binary expansions; then we add a single term and find that $\texttt{A268289}_{2^k}=2^k-k$, we then focus on the $n$ following terms by noticing that the binary expansions of integers $2^k+1, 2^k+2,\dots,2^k+n$ are strongly related to those of $1, 2, \dots, n$ which are involved in $\texttt{A268289}_n$, and we thus only have to take care of their leading digits $\texttt{1000}\dots$
\end{proof}

\begin{lemma}\label{lemma3}
    Let $k$ and $n$ be some positive integers such than $n<2^k$; then
\[
        \vert S_{n+2^k}\vert = \vert S_n\vert
        + \left(n+1\right) \left(\lfloor\log_2(n)\rfloor-k+2\right) + 2^k - 2^{\lfloor\log_2(n)\rfloor+1}\,\textrm{.}
\]
\end{lemma}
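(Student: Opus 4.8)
The plan is to compute $|S_{n+2^k}|$ directly by partitioning the index range $1\leqslant m\leqslant n+2^k$ into three pieces and showing that their contributions sum to $|S_n|$ plus the stated increment; this mirrors the enumerative proof of Lemma~\ref{lemma1}, which is natural since the right-hand side here is word-for-word the one appearing there. The first step is to recast the membership condition in a transparent form. Writing $\ell(m)=\lfloor\log_2 m\rfloor$, the requirement $\big((n-m)\bmod 2^{\ell(m)+1}\big)<2^{\ell(m)}$ says precisely that the binary digit of $n-m$ at position $\ell(m)$ equals $0$. Phrasing membership as a single-bit test is what makes the range analysis below manageable.

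Next I would dispose of the two easy ranges. For the lower range $1\leqslant m\leqslant n$ one has $\ell(m)\leqslant\lfloor\log_2 n\rfloor<k$ and $0\leqslant n-m<2^k$, so adding $2^k$ to $n-m$ sets bit $k$ but leaves every lower bit, in particular the tested bit at position $\ell(m)<k$, unchanged; hence $m\in S_{n+2^k}\iff m\in S_n$ and this range contributes exactly $|S_n|$. For the top range $2^k\leqslant m\leqslant n+2^k$ one has $\ell(m)=k$ (using $n<2^k$), and $n+2^k-m=n-(m-2^k)$ lies in $[0,n]\subset[0,2^k)$, so its bit $k$ is always $0$; every such $m$ qualifies, contributing $n+1$ terms.

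The substantive work is the \emph{middle range} $n+1\leqslant m\leqslant 2^k-1$, which I would organise by the value $j=\ell(m)$, so that $m$ sweeps the dyadic block $[2^j,2^{j+1}-1]$. Setting $L=\lfloor\log_2 n\rfloor$, the blocks with $j<L$ miss the range entirely, the block $j=L$ meets it only in $[n+1,2^{L+1}-1]$, and for $L+1\leqslant j\leqslant k-1$ the whole block lies inside. The key simplification is that since $2^k\equiv 0$ and $n<2^{L+1}\leqslant 2^{j+1}$ one has $(n+2^k)\bmod 2^{j+1}=n$; because $m\geqslant 2^j>n$ in these blocks the tested quantity is $n-m+2^{j+1}$, and the bit condition collapses to the clean inequality $m>n+2^j$. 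Counting $m\in[2^j,2^{j+1}-1]$ with $m>n+2^j$ gives $2^j-n-1$ for each $j\in[L+1,k-1]$, while the partial block $j=L$ is empty, since $n\geqslant 2^L$ forces $n+2^L+1>2^{L+1}-1$.

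Finally I would assemble the total. By a geometric sum the middle range contributes $\sum_{j=L+1}^{k-1}(2^j-n-1)=(2^k-2^{L+1})-(n+1)(k-L-1)$, and adding the lower contribution $|S_n|$ and the top contribution $n+1$ gives $|S_n|+(n+1)(L-k+2)+2^k-2^{L+1}$, which is the claimed identity with $L=\lfloor\log_2 n\rfloor$. The main obstacle I anticipate is the middle-range bookkeeping: correctly determining which dyadic blocks meet the interval, confirming that the $j=L$ block genuinely contributes nothing, and reducing the modular bit test to the inequality $m>n+2^j$ without sign or off-by-one slips in the modular reduction.
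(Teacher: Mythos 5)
Your argument is correct and is essentially the paper's own proof carried out in full detail: you arrive at exactly the same decomposition $S_{n+2^k}=S_n\cup\bigcup_{j=\lfloor\log_2 n\rfloor+1}^{k-1}\{2^j+n+1,\dots,2^{j+1}-1\}\cup\{2^k,\dots,2^k+n\}$ that the paper states, and your cardinality sum matches. The paper simply asserts this decomposition and the invariance of membership for $m\leqslant n$, whereas you justify both via the bit-test reformulation and the block-by-block modular reduction.
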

\begin{proof}
According to the definition~(\ref{def_sets}), and by noticing that an integer $e$ such that $e\leqslant n$ belongs to $S_{n+2^k}$ if and only if it also belongs to $S_n$, we find
\[
    \begin{array}{lll}
        S_{n+2^k}= S_n
        &  \cup &
        \bigcup_{j=\lfloor\log_2(n)\rfloor+1}^{k-1} \left\{ 2^j+n+1, 2^j+n+2,\dots,2^{j+1}-1  \right\} \\[10pt]
        & \cup &\left\{2^k, 2^k+1,\dots,2^k+n\right\}
\end{array}
\]
and summing the cardinalities of all these subsets yields the expected result.
\end{proof}

\begin{theorem}\label{identity}
For every nonnegative integer $n$, ${\normalfont\texttt{A268289}}_n = \vert S_n\vert$.
\end{theorem}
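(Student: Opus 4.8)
The plan is to argue by strong induction on $n$, exploiting the fact that Lemma~\ref{lemma1} and Lemma~\ref{lemma3} are, term for term, the \emph{same} recurrence: for $1\le n<2^k$ both $\texttt{A268289}_{n+2^k}$ and $\vert S_{n+2^k}\vert$ equal their value at $n$ plus the identical correction $\left(n+1\right)\left(\lfloor\log_2 n\rfloor-k+2\right)+2^k-2^{\lfloor\log_2 n\rfloor+1}$. Consequently, once the two quantities agree at some index $n\ge 1$, they automatically agree at $n+2^k$ for every admissible $k$; equivalently, the difference $\texttt{A268289}_n-\vert S_n\vert$ is left unchanged when a leading binary digit is prepended. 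Note that Lemma~\ref{lemma2} is not needed for this argument.

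First I would settle the base cases that the recurrence cannot reach. For $n=0$ both sides vanish, since the empty range gives $\texttt{A268289}_0=0$ and the constraint $1\le m\le 0$ forces $S_0=\emptyset$. The genuine base cases are the pure powers of two $n=2^k$: because the hypotheses of Lemmas~\ref{lemma1} and~\ref{lemma3} require $1\le n<2^k$, no power of two can be written as $n'+2^{k'}$ with $n'\ge 1$, so each $2^k$ must be treated on its own.

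For these I would evaluate both sides explicitly. On the digit-sum side, the enumerative argument already recorded in the proof of Lemma~\ref{lemma1} gives $\texttt{A268289}_{2^k}=2^k-k$. On the set side I would count $S_{2^k}$ directly from~(\ref{def_sets}): for $2^j\le m<2^{j+1}$ with $j<k$ one has $2^{j+1}\mid 2^k$, so the residue $\left(2^k-m\right)\bmod 2^{j+1}$ equals $2^{j+1}-m$, which lies below $2^j$ exactly when $m>2^j$. This contributes $2^j-1$ values of $m$ for each $j$ from $0$ to $k-1$, while $m=2^k$ contributes one more, for a total of $\sum_{j=0}^{k-1}\left(2^j-1\right)+1=2^k-k$. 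Hence $\vert S_{2^k}\vert=2^k-k=\texttt{A268289}_{2^k}$, closing every base case.

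The inductive step is then immediate. Given $n\ge 1$ that is not a power of two, I would set $k=\lfloor\log_2 n\rfloor$ and $r=n-2^k$, so that $1\le r<2^k$ and $r<n$; the induction hypothesis yields $\texttt{A268289}_r=\vert S_r\vert$, and feeding $r$ and $k$ into Lemma~\ref{lemma1} on the left and Lemma~\ref{lemma3} on the right adds the same correction term to each, giving $\texttt{A268289}_n=\vert S_n\vert$. I expect the only substantive work to be the direct evaluation of $\vert S_{2^k}\vert$ in the base case; once one observes that the two recurrences coincide, everything else is bookkeeping.
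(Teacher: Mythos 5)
Your proposal is correct and follows the same strategy as the paper: both rest on the observation that Lemmas~\ref{lemma1} and~\ref{lemma3} are the identical recurrence, so the two sequences agree once they agree on the indices the recurrence cannot reach. You are in fact more careful than the paper here: the paper only checks the initial terms $(0,1,1)$, whereas the indices $2^k$ for $k\geqslant 2$ are also unreachable (no power of two is of the form $n+2^k$ with $1\leqslant n<2^k$), and your direct count $\vert S_{2^k}\vert=\sum_{j=0}^{k-1}(2^j-1)+1=2^k-k=\texttt{A268289}_{2^k}$ supplies exactly the base cases the paper's proof glosses over.
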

\begin{proof}
    The identities~\ref{lemma1} and~\ref{lemma3} both give a rule for building any term of the corresponding sequences by starting from the three initial terms. The three initial terms $(0,1,1)$ are the same, and the building rule is identical in both sequences; thus both sequences are identical.
\end{proof}

\begin{theorem}\label{digit0}
    Let $n$ be a positive integer such that $n<3\times 2^{\lfloor\log_2(n)\rfloor-1}$, then
    \[
        {\normalfont\texttt{A268289}}_{n+2^{\lfloor\log_2(n)\rfloor-1}}
        =
        {\normalfont\texttt{A268289}}_n
    +2\left(n+1\right)-2^{\lfloor\log_2(n)\rfloor+1}
    \,\textrm{.}
    \]
\end{theorem}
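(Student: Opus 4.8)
The plan is to argue by direct digit enumeration, in the spirit of Lemma~\ref{lemma1}, rather than to invoke the Takagi formula of Lemma~\ref{lemma2}. Write $\ell=\lfloor\log_2(n)\rfloor$, so that $2^{\ell}\leqslant n<3\cdot 2^{\ell-1}$ and the shift is by $2^{\ell-1}$. Since consecutive terms of \texttt{A268289} differ by the per-integer deficient digit sum, the left-hand side minus $\texttt{A268289}_n$ equals
\[
    \sum_{m=n+1}^{\,n+2^{\ell-1}} d(m),\qquad d(m)=\bigl(\#\text{ of }1\text{'s}\bigr)-\bigl(\#\text{ of }0\text{'s}\bigr)\text{ in }m.
\]
First I would check, using the hypothesis, that every $m$ in this range satisfies $2^{\ell}<m<2^{\ell+1}$: indeed $n+1\geqslant 2^{\ell}+1$, while $n\leqslant 3\cdot 2^{\ell-1}-1$ forces $n+2^{\ell-1}\leqslant 2^{\ell+1}-1$. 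Hence each such $m$ has exactly $\ell+1$ binary digits with leading digit~$1$, and writing $m=2^{\ell}+r$ gives $d(m)=1+2\,s(r)-\ell$, where $s(r)$ is the number of $1$'s in $r$ and $r$ runs over the block $[a,\,a+2^{\ell-1}-1]$ with $a=n+1-2^{\ell}$. The hypothesis translates exactly into $1\leqslant a\leqslant 2^{\ell-1}$.

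Summing the expression for $d(m)$ over the block, the whole increment becomes $2^{\ell-1}(1-\ell)+2\Sigma$, where $\Sigma=\sum_{r=a}^{a+2^{\ell-1}-1}s(r)$ is the total number of $1$-bits appearing in $2^{\ell-1}$ consecutive integers. A short computation shows that the claimed right-hand side $2(n+1)-2^{\ell+1}$ equals $2a$, so the theorem reduces to the single identity
\[
    \Sigma=(\ell-1)2^{\ell-2}+a .
\]
This popcount count is the crux of the argument and the step I expect to be the main obstacle, because the block of length $2^{\ell-1}$ is not aligned to a power-of-two boundary, so its value a priori depends on $a$.

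The key trick resolves this. Since $1\leqslant a\leqslant 2^{\ell-1}$, the block straddles the midpoint $2^{\ell-1}$, so I would split it into its lower part $[a,\,2^{\ell-1}-1]$ and its upper part $[2^{\ell-1},\,a+2^{\ell-1}-1]$. Writing each upper element as $2^{\ell-1}+t$ with $0\leqslant t\leqslant a-1$ and using $s(2^{\ell-1}+t)=s(t)+1$, the two parts reassemble into the full dyadic block $[0,\,2^{\ell-1}-1]$ plus an extra $+1$ contributed by each of the $a$ upper elements:
\[
    \Sigma=\sum_{r=0}^{2^{\ell-1}-1}s(r)+a .
\]
The remaining sum is the classical total $1$-bit count over a complete block of $2^{\ell-1}$ integers, in which each of the $\ell-1$ low bits is set in exactly half of them, giving $(\ell-1)2^{\ell-2}$. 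This yields $\Sigma=(\ell-1)2^{\ell-2}+a$, and substituting back collapses $2^{\ell-1}(1-\ell)+2\Sigma$ to $2a=2(n+1)-2^{\ell+1}$, which finishes the proof.
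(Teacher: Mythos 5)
Your proof is correct, and it takes a genuinely different route from the paper's. The paper proves this theorem by passing through Theorem~\ref{identity} ($\texttt{A268289}_n=\vert S_n\vert$) and then tracking, via the congruence conditions in the definition~(\ref{def_sets}), which elements are gained and lost when $S_n$ becomes $S_{n+2^{\lfloor\log_2(n)\rfloor-1}}$; it is therefore dependent on the whole $S_n$ machinery of Lemmas~\ref{lemma1} and~\ref{lemma3}. You instead work directly from the definition of \texttt{A268289} as a cumulated digit sum, reducing the increment to a popcount sum $\Sigma$ over a block of $2^{\ell-1}$ consecutive integers that is not aligned to a dyadic boundary, and you correctly identify and resolve the crux: since $1\leqslant a\leqslant 2^{\ell-1}$, the block straddles $2^{\ell-1}$, and folding the upper part down via $s(2^{\ell-1}+t)=s(t)+1$ reassembles a complete dyadic block plus an excess of exactly $a$. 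All the intermediate computations check out ($2(n+1)-2^{\ell+1}=2a$, the range check $2^{\ell}<m<2^{\ell+1}$, and $\sum_{r=0}^{2^{\ell-1}-1}s(r)=(\ell-1)2^{\ell-2}$), and the argument survives the boundary case $a=2^{\ell-1}$ where the lower part is empty. What your approach buys is self-containedness and an elementary flavor consistent with the proof of Lemma~\ref{lemma1}; what the paper's approach buys is uniformity, since it reuses the set-theoretic description already established for Theorem~\ref{identity}. The only caveat, shared with the paper's own statement, is that $\ell\geqslant 1$ must be assumed implicitly for $2^{\ell-1}$ to be an integer shift.
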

\begin{proof}
    According to the identity~\ref{identity}, $\texttt{A268289}_n=\vert S_n\vert$, and thus we refer to the definition of~$S_n$ in~(\ref{def_sets}). Adding the considered power of~$2$ to~$n$ keeps all congruence relations unchanged except for the largest integers, namely for~$m\geqslant 2^{\lfloor\log_2(n)\rfloor-1}$. Thus, only two subsets of~$S_n$ have to be studied:
    \begin{itemize}
        \item integers~$m$ such that $2^{\lfloor\log_2(n)\rfloor-1}\leqslant m<2^{\lfloor\log_2(n)\rfloor}$ belong to~$S_{n+2^{\lfloor\log_2(n)\rfloor-1}}$ if and only if they do not belong to~$S_n$, thus the new set is built by removing~$3\times 2^{\lfloor\log_2(n)\rfloor-1}-n-1$ elements and by adding $-2^{\lfloor\log_2(n)\rfloor}+n+1$ new elements;
        \item integers~$m$ such that $2^{\lfloor\log_2(n)\rfloor}\leqslant m\leqslant n$ belong simultaneously to~$S_n$ and $S_{n+2^{\lfloor\log_2(n)\rfloor-1}}$ while $2^{\lfloor\log_2(n)\rfloor-1}$ integers greater than~$n$ are also added to the set $S_{n+2^{\lfloor\log_2(n)\rfloor-1}}$.
    \end{itemize}
    Thus $\vert S_{n+2^{\lfloor\log_2(n)\rfloor-1}}\vert - \vert S_n\vert = 2\left(n+1\right)-2^{\lfloor\log_2(n)\rfloor+1}$ as expected.
\end{proof}

\section{Useful identities involving the Takagi function}\label{takagisection}

\begin{theorem}\label{main}
For every real number $\xi\in[0,1]$ and any positive integer $m$,
\[
    \tau((\xi+1)2^{-m}) = 2^{-m}\left(m\left(\xi+1\right)-2\xi+\tau(\xi)\right)
    \,\textrm{.}
\]
\end{theorem}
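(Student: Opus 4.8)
The plan is to proceed by induction on $m$, using two standard functional equations of the Takagi function: the reflection symmetry $\tau(x)=\tau(1-x)$ and the halving relation $\tau(y/2)=\tfrac{y}{2}+\tfrac12\tau(y)$, valid for $y\in[0,1]$. The latter follows directly from the self-similar defining series of $\tau$, since on $[0,\tfrac12]$ the distance to the nearest integer equals the identity. Throughout the argument, the point $(\xi+1)2^{-m}$ lies in $[2^{-m},2^{1-m}]\subseteq[0,1]$ for $m\geqslant 1$, so the halving relation may be applied freely at every stage; it is worth recording this interval containment explicitly so that each invocation is justified.

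For the base case $m=1$ I would first note that $(\xi+1)/2\in[\tfrac12,1]$, where the halving relation is not directly convenient, so I would reflect: since $1-(\xi+1)/2=(1-\xi)/2$, symmetry gives $\tau((\xi+1)/2)=\tau((1-\xi)/2)$. Now $(1-\xi)/2\in[0,\tfrac12]$, so the halving relation with $y=1-\xi$ yields $\tau((1-\xi)/2)=\tfrac{1-\xi}{2}+\tfrac12\tau(1-\xi)$, and a second application of symmetry replaces $\tau(1-\xi)$ by $\tau(\xi)$. Collecting terms gives $\tfrac12\bigl(1-\xi+\tau(\xi)\bigr)$, which is exactly the claimed right-hand side at $m=1$.

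For the inductive step I would write $(\xi+1)2^{-(m+1)}=\bigl((\xi+1)2^{-m}\bigr)/2$ and apply the halving relation with $y=(\xi+1)2^{-m}$, obtaining $\tau((\xi+1)2^{-(m+1)})=2^{-(m+1)}(\xi+1)+\tfrac12\,\tau((\xi+1)2^{-m})$. Substituting the induction hypothesis for the remaining Takagi value and factoring out $2^{-(m+1)}$ turns the bracket into $(\xi+1)+\bigl(m(\xi+1)-2\xi+\tau(\xi)\bigr)=(m+1)(\xi+1)-2\xi+\tau(\xi)$, which is the statement at level $m+1$.

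I do not expect a serious obstacle: once the two functional equations are in place, the computation telescopes routinely. The one point demanding genuine care is the \emph{base case}, where the naive halving relation fails to apply directly because $(\xi+1)/2$ lands in the right half $[\tfrac12,1]$; the reflection trick is what makes the interval bookkeeping work, and keeping track of the precise containments is the only place where an error could creep in.
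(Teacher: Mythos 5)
Your proof is correct, but it takes a genuinely different route from the paper. The paper obtains the identity by combining Lemma~\ref{lemma1} (the recursive structure of \texttt{A268289}) with Lemma~\ref{lemma2} (its expression via $\tau$), which yields the formula first for dyadic rationals $\xi$ and then for all real $\xi\in[0,1]$ by continuity of $\tau$; this fits the paper's programme of extracting Takagi identities from the combinatorial sequence. You instead bypass the sequence entirely and deduce the statement by induction on $m$ from the two classical functional equations $\tau(x)=\tau(1-x)$ and $\tau(y/2)=y/2+\tau(y)/2$ --- precisely the identities from Theorem~4.1 of \cite{lagarias} that the paper itself invokes later (Theorems~\ref{oeis1}--\ref{oeis5}), so no circularity is introduced. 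Your base case (reflect $(\xi+1)/2$ to $(1-\xi)/2$ before halving) and inductive step both check out, and the interval bookkeeping is handled. What your argument buys is a short, self-contained, purely analytic proof that also makes clear that Theorem~\ref{main} is essentially an iterated form of the known functional equations; what the paper's route buys is a demonstration that the same identity falls out of the digit-counting recursion, which is the point of the article. Both are valid; yours is arguably the more economical derivation of the statement in isolation.
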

\begin{proof}
    By combining the identities~\ref{lemma1} and~\ref{lemma2}, and after some simplification, we find that for every dyadic rational $\xi=k/2^m$ in $\left[0,1\right]$,
    \[
        \tau(\xi)
        = \xi\left(m-\lfloor\log_2(k)\rfloor-2\right) + 2^{\lfloor\log_2(k)\rfloor-m}\left(2+\tau\left(k\, 2^{-\lfloor\log_2(k)\rfloor}-1\right)\right)
    \]
from which comes that for every dyadic rational number $\xi\in[1,2]$ and any positive integer $m$,
    \[
        \tau(2^{-m}\xi) = 2^{-m}\left(m\xi-2\xi+2+\tau(\xi-1)\right)
        \,\textrm{.}
    \]
The Takagi function being continuous, it is then easy to enclose any real from the same interval between two dyadic rational numbers at an arbitrary precision.
\end{proof}

\begin{theorem}\label{half}
    For every real number $\xi\in[0,1/2]$,
\[
    \tau(\xi+1/2) = 1/2 -2\xi+\tau(\xi) \,\textrm{.}
\]
\end{theorem}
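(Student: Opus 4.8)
The plan is to obtain Theorem~\ref{half} as essentially the $m=1$ case of Theorem~\ref{main}, after reconciling the two statements by a change of variable and one application of the dilation (doubling) identity for the Takagi function. First I would specialize Theorem~\ref{main} to $m=1$, which gives, for every $\xi\in[0,1]$,
\[
    \tau\!\left(\tfrac{\xi+1}{2}\right)=\tfrac12\left(1-\xi+\tau(\xi)\right).
\]
The target statement concerns $\tau(\xi+1/2)$ for $\xi\in[0,1/2]$, whose argument also ranges over $[1/2,1]$, so the two expressions describe the same portion of the graph of $\tau$. To line them up I would substitute $\xi\mapsto 2\xi$ in the displayed identity, which is legitimate precisely when $2\xi\in[0,1]$, i.e.\ when $\xi\in[0,1/2]$; this yields
\[
    \tau\!\left(\xi+\tfrac12\right)=\tfrac12\left(1-2\xi+\tau(2\xi)\right).
\]

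It then remains to express $\tau(2\xi)$ through $\tau(\xi)$. The needed ingredient is the dilation identity $\tau(2\xi)=2\tau(\xi)-2\xi$, valid for $\xi\in[0,1/2]$. This is the classical self-similarity of the Takagi function: it follows at once from the series $\tau(x)=\sum_{n\geqslant0}2^{-n}\langle 2^nx\rangle$, with $\langle\cdot\rangle$ denoting the distance to the nearest integer, and is recorded in~\cite{lagarias}. Alternatively it can be recovered from Theorem~\ref{main} itself by comparing the formula at two consecutive scales $m$ and $m-1$: for $\xi\in[0,1]$ and $m\geqslant2$ the arguments $(\xi+1)2^{-m}$ lie in $(0,1/2]$, and a short algebraic check shows that Theorem~\ref{main} already forces $\tau((\xi+1)2^{-(m-1)})=2\,\tau((\xi+1)2^{-m})-2(\xi+1)2^{-m}$, which is exactly the dilation identity at the point $(\xi+1)2^{-m}$. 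Since every point of $(0,1/2]$ is uniquely of this form for a suitable $m\geqslant2$, the identity is obtained on all of $[0,1/2]$.

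Substituting $\tau(2\xi)=2\tau(\xi)-2\xi$ into the previous display and simplifying gives
\[
    \tau\!\left(\xi+\tfrac12\right)=\tfrac12\left(1-2\xi+2\tau(\xi)-2\xi\right)=\tfrac12-2\xi+\tau(\xi),
\]
which is the claim. I expect the only genuine obstacle to be the justification of the dilation identity: the change of variable and the final arithmetic are routine, but one must either invoke the standard self-similarity of $\tau$ or derive it in-house from Theorem~\ref{main} as indicated, taking care that the telescoping argument delivers the identity only on $[0,1/2]$ --- which is, fortunately, precisely the interval on which Theorem~\ref{half} is stated.
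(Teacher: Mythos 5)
Your proof is correct, but it takes a genuinely different route from the paper. The paper obtains Theorem~\ref{half} by combining Lemma~\ref{lemma2} with Theorem~\ref{digit0} --- the latter being the combinatorial identity coming from the sets $S_n$ --- which yields the functional equation at the dyadic points $\frac{n+1}{2^{\lfloor\log_2 n\rfloor}}-\frac12$, and then extends to all of $[0,1/2]$ by density and continuity. You instead specialize Theorem~\ref{main} to $m=1$, substitute $\xi\mapsto 2\xi$, and close the gap with the self-similarity $\tau(2\xi)=2\tau(\xi)-2\xi$ on $[0,1/2]$; that last ingredient is legitimate here, being equivalent to the identity $\tau(\xi/2)=\xi/2+\tau(\xi)/2$ that the paper itself quotes from Theorem~4.1 of \cite{lagarias} in later proofs, and your in-house derivation of it by comparing Theorem~\ref{main} at scales $m$ and $m-1$ is a nice bonus that keeps the argument self-contained (the telescoping computation checks out, and the minor overstatement that each point of $(0,1/2]$ is \emph{uniquely} of the form $(\xi+1)2^{-m}$ is harmless). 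What each approach buys: yours is shorter and purely analytic, bypassing Theorem~\ref{digit0} entirely and showing that \ref{half} is already implicit in \ref{main} plus classical Takagi self-similarity; the paper's route is deliberately redundant with the classical theory, since its point is to show that the combinatorial identity on $|S_n|$ independently forces this functional equation for $\tau$. There is no circularity in your argument, since Theorem~\ref{main} is proved from Lemmas~\ref{lemma1} and~\ref{lemma2} only.
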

\begin{proof}
    By combining the identities~\ref{lemma2} and~\ref{digit0}, we find, after some simplification, that for~$n$ such that $n<3\times 2^{\lfloor\log_2(n)\rfloor-1}$,
    \[
        \tau\left( \displaystyle\frac{n+1}{2^{\lfloor\log_2(n)\rfloor}}-\displaystyle\frac{1}{2}\right)
        =
        \displaystyle\frac{5}{2} - \frac{2(n+1)}{2^{\lfloor\log_2(n)\rfloor}}
        + \tau\left( \displaystyle\frac{n+1}{2^{\lfloor\log_2(n)\rfloor}}-1\right)
    \]
and since the Takagi function is continuous, we prove the expected identity.
\end{proof}

\begin{theorem}\label{general}
    For every positive integer~$n$,
    \[
        {\normalfont\texttt{A268289}}_n
        =
        n - 2^k \tau\left(\displaystyle\frac{n+1} {2^k }   - 1  \right)
    \]
    with $k=\lfloor\log_2(n)\rfloor$.
\end{theorem}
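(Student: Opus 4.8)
The plan is to obtain the identity almost immediately from Lemma~\ref{lemma2} after reconciling the two floor quantities it involves with the single floor $k=\lfloor\log_2(n)\rfloor$ used here, and then to dispose of one boundary case by hand. First I note that the formula is well posed: from $k=\lfloor\log_2(n)\rfloor$ we have $2^k\leqslant n+1\leqslant 2^{k+1}$, so $\frac{n+1}{2^k}-1\in[0,1]$, which is exactly the interval on which $\tau$ is evaluated. The essential observation is that the quantity $m=\lfloor\log_2(n+1)\rfloor$ appearing in Lemma~\ref{lemma2} equals $k$ unless $n+1$ is a power of $2$, that is, unless $n=2^{k+1}-1$.

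In the generic case $m=k$, the auxiliary number $\xi=(n+1)2^{-m}-1$ of Lemma~\ref{lemma2} is precisely the argument $\frac{n+1}{2^k}-1$ appearing in the statement. Substituting $m=k$ into Lemma~\ref{lemma2}, the factor $(n+1)(m-k+1)$ collapses to $n+1$ while the terms $-2\cdot 2^m$ and $+2^{k+1}$ cancel, leaving $\texttt{A268289}_n=(n+1)-1-2^k\tau(\xi)=n-2^k\tau\!\left(\frac{n+1}{2^k}-1\right)$, which is the claim.

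It then remains to treat the boundary case $n=2^{k+1}-1$, where $m=k+1$ and $\xi=(n+1)2^{-m}-1=0$, so $\tau(\xi)=0$ and Lemma~\ref{lemma2} reduces (using $n+1=2^{k+1}$) to $\texttt{A268289}_n=n$. Here the argument on the right-hand side of the target is $\frac{n+1}{2^k}-1=1$, and since $\tau(1)=0$ that side also equals $n$, so the two agree. The only real obstacle is this bookkeeping with the floors: one must notice that $m$ and $k$ differ at a single value of $n$ and check that the mismatch is absorbed by the vanishing of $\tau$ at both endpoints $0$ and $1$; the algebra in each branch is then a routine cancellation.
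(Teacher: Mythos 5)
Your proof is correct, but it takes a genuinely different and noticeably shorter route than the paper. The paper never substitutes directly into Lemma~\ref{lemma2}; instead it splits on whether $n<3\times 2^{\lfloor\log_2(n)\rfloor-1}$ or not, combines Lemma~\ref{lemma2} with Theorem~\ref{digit0} in each branch to produce an expression involving $\tau\bigl(\tfrac{n+1}{2^k}-\tfrac12\bigr)$ or $\tau\bigl(\tfrac{n+1}{2^k}-\tfrac32\bigr)$, and then invokes Theorem~\ref{half} to convert these back to $\tau\bigl(\tfrac{n+1}{2^k}-1\bigr)$. You observe instead that $m=\lfloor\log_2(n+1)\rfloor$ coincides with $k=\lfloor\log_2(n)\rfloor$ except at $n=2^{k+1}-1$, so that in the generic case the factor $(n+1)(m-k+1)$ collapses to $n+1$ and the terms $-2\cdot 2^m$ and $+2^{k+1}$ cancel, giving the claim immediately; the boundary case is absorbed because $\tau$ vanishes at both $0$ and $1$. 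I checked the algebra in both branches and it is right (e.g.\ for $n=5$ both sides give $3$). What your approach buys is economy: it uses only Lemma~\ref{lemma2} and the values $\tau(0)=\tau(1)=0$, with no dependence on Theorems~\ref{digit0} and~\ref{half}. What the paper's longer route buys is that it exercises those two intermediate identities, which the author wants on record anyway; but as a proof of this particular statement yours is cleaner and equally rigorous.
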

\begin{proof}
    We work on two separate cases; let initially $n$ be a positive integer such that $n<3\times 2^{\lfloor\log_2(n)\rfloor-1}$, then, by combining the two identities~\ref{lemma2} and~\ref{digit0}, we get after some simplification:
    \[
        {\normalfont\texttt{A268289}}_n
        =
        -n - 2 + 2^k \left( \displaystyle\frac{5}{2}
        -\tau\left(\displaystyle\frac{n+1} {2^k }   -\frac{1}{2}  \right) \right)
    \]
    with $k=\lfloor\log_2(n)\rfloor$. Then we clean the resulting expression with the help of the theorem~\ref{half} and find the expected expression.

    Let now $n$ be a positive integer such that $n\geqslant 3\times 2^{\lfloor\log_2(n)\rfloor-1}$, then, by combining again the two identities~\ref{lemma2} and~\ref{digit0}, we get after some simplification:
    \[
        {\normalfont\texttt{A268289}}_n
        =
        3n + 2 - 2^k \left( \displaystyle\frac{7}{2}
        +\tau\left(\displaystyle\frac{n+1} {2^k }   -\frac{3}{2}  \right) \right)
    \]
    with $k=\lfloor\log_2(n)\rfloor$. We clean again the resulting expression with the help of the theorem~\ref{half} and find the expected expression.
\end{proof}
\begin{corollary}\label{majorlink2}
    For every dyadic rational $\xi=n/2^k$ in $\left[0,1\right]$,
    \[
        \tau(\xi)=
        1 + \xi - \displaystyle\frac{1+{\normalfont\texttt{A268289}}_{n+2^k-1}}
        {2^k} \,\textrm{.}
    \]
\end{corollary}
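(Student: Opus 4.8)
The plan is to obtain the corollary as a direct algebraic rearrangement of Theorem~\ref{general}, applied to a suitably chosen index. Theorem~\ref{general} states that for every positive integer $N$ we have ${\normalfont\texttt{A268289}}_N = N - 2^{\lfloor\log_2(N)\rfloor}\,\tau\bigl((N+1)2^{-\lfloor\log_2(N)\rfloor} - 1\bigr)$. Given a dyadic rational $\xi = n/2^k$ in $[0,1]$, the natural move is to set $N = n + 2^k - 1$, since then $N+1 = n + 2^k = 2^k(\xi+1)$, so that $(N+1)2^{-k} - 1 = \xi$ is exactly the argument appearing in the corollary. The first step is therefore to verify that $\lfloor\log_2(N)\rfloor = k$ for this choice of $N$: because $\xi\in[0,1]$ we have $0\leqslant n\leqslant 2^k$, hence $2^k - 1 \leqslant N \leqslant 2^{k+1} - 1$, and for $N$ in this range the floor of the base-two logarithm is indeed $k$ (with the harmless boundary to be checked separately).

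Once the index is fixed, the second step is purely mechanical: substitute $N = n+2^k-1$ and $\lfloor\log_2(N)\rfloor = k$ into Theorem~\ref{general} to get
\[
    {\normalfont\texttt{A268289}}_{n+2^k-1} = (n+2^k-1) - 2^k\,\tau(\xi)\,\textrm{.}
\]
Solving this linear relation for $\tau(\xi)$ gives $\tau(\xi) = \bigl((n+2^k-1) - {\normalfont\texttt{A268289}}_{n+2^k-1}\bigr)2^{-k}$, and rewriting $n\,2^{-k} = \xi$ turns the right-hand side into $1 + \xi - (1 + {\normalfont\texttt{A268289}}_{n+2^k-1})2^{-k}$, which is exactly the claimed expression. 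This is the whole content of the argument.

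The only genuine obstacle is a boundary bookkeeping issue rather than anything substantive. Theorem~\ref{general} is stated for \emph{positive} integers, so the degenerate cases $n=0$ (where $\xi=0$, $N = 2^k-1$) and $n=2^k$ (where $\xi=1$, $N = 2^{k+1}-1$) need a moment's care: in the latter case $N$ is already at the high end of the dyadic range and one must confirm $\lfloor\log_2(N)\rfloor = k$ still holds, while $\xi=0$ and $\xi=1$ give the known boundary values $\tau(0)=\tau(1)=0$ against which the formula can be checked directly. I expect both endpoints to reduce to elementary verifications once the generic case is in hand, so the main work really is just confirming the floor-logarithm identity for $N=n+2^k-1$ and then performing the rearrangement.
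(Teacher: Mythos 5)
Your proposal is correct and matches the paper's (implicit) argument: the corollary is stated without proof precisely because it is the direct rearrangement of Theorem~\ref{general} at the index $N=n+2^k-1$, exactly as you carry out, including the check that $\lfloor\log_2 N\rfloor=k$ for $1\leqslant n\leqslant 2^k$. Your separate treatment of the $n=0$ endpoint (where $N=2^k-1$ and the floor drops to $k-1$, but $\texttt{A268289}_{2^k-1}=2^k-1$ makes the formula give $\tau(0)=0$ directly) is the right way to close the only gap.
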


\begin{theorem}\label{initial_identity}
For every nonnegative integer~$n$,
\[
    {\normalfont\texttt{A268289}}_{n+2^{m+1}} = 2n- 2^m\tau(\xi) + 1
\]
with $m=\lfloor\log_2(n+1)\rfloor$, $\xi=(n+1)2^{-m} - 1$ and $\tau$ being the Takagi function.
\end{theorem}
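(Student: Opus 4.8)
The plan is to obtain this identity directly from Theorem~\ref{general}, applied not to $n$ but to the shifted index $N=n+2^{m+1}$, and then to fold the resulting Takagi argument back onto $\tau(\xi)$ by means of Theorem~\ref{main}. Since Theorem~\ref{general} evaluates each term of the sequence through a single value of $\tau$ governed by $\lfloor\log_2 N\rfloor$, the whole proof splits into a short floor-function computation followed by one substitution into the functional equation for~$\tau$.

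First I would determine $\lfloor\log_2 N\rfloor$. From $m=\lfloor\log_2(n+1)\rfloor$ we have $2^m\leqslant n+1<2^{m+1}$, whence $3\cdot 2^m-1\leqslant N<2^{m+2}$; since $3\cdot 2^m-1\geqslant 2^{m+1}$ for every $m\geqslant 0$, this forces $\lfloor\log_2 N\rfloor=m+1$. Substituting $N$ into Theorem~\ref{general} then gives
\[
    {\normalfont\texttt{A268289}}_{N}=N-2^{m+1}\,\tau\!\left(\frac{N+1}{2^{m+1}}-1\right),
\]
and the argument of $\tau$ simplifies to $\frac{N+1}{2^{m+1}}-1=\frac{n+1}{2^{m+1}}=\frac{\xi+1}{2}$, using $\xi+1=(n+1)2^{-m}$.

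The central step is then to rewrite $\tau\!\left(\frac{\xi+1}{2}\right)$. Because $m=\lfloor\log_2(n+1)\rfloor$ forces $\xi\in[0,1)$, I would apply Theorem~\ref{main} with its integer parameter set to $1$, obtaining $\tau\!\left((\xi+1)2^{-1}\right)=\tfrac12\left(1-\xi+\tau(\xi)\right)$. Feeding this back into the display above and replacing $2^m\xi$ by $(n+1)-2^m$, the powers of two cancel in pairs and the whole expression collapses to $2n+1-2^m\tau(\xi)$, which is the claimed formula.

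The only point requiring care is the floor computation, together with the boundary value $n=0$ (where $m=0$, $\xi=0$, $N=2^{m+1}$): one must check that $N$ lands in $[2^{m+1},2^{m+2})$ uniformly, so that $\lfloor\log_2 N\rfloor=m+1$ needs no case distinction. Once this is secured the remaining manipulation is purely algebraic, and no continuity argument is needed here since both Theorem~\ref{general} and Theorem~\ref{main} are already available in the required generality.
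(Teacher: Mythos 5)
Your proof is correct, and it checks out line by line: the bound $2^m\leqslant n+1<2^{m+1}$ does give $2^{m+1}\leqslant N<2^{m+2}$, the argument of $\tau$ reduces to $(\xi+1)/2$ with $\xi\in[0,1)$, and Theorem~\ref{main} with exponent $1$ yields $\tau\bigl((\xi+1)/2\bigr)=\tfrac12\bigl(1-\xi+\tau(\xi)\bigr)$, after which the algebra collapses to $2n+1-2^m\tau(\xi)$ as you claim. However, your route is genuinely different from the paper's: the paper obtains this identity by combining Lemma~\ref{lemma1} (the recursion for passing from index $n$ to $n+2^k$) directly with the closed form of Lemma~\ref{lemma2}, i.e.\ it works additively at the level of the sequence, whereas you evaluate the later closed form of Theorem~\ref{general} at the shifted index and then fold the Takagi argument back using the functional equation of Theorem~\ref{main}. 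Your version buys a cleaner bookkeeping --- Theorem~\ref{general} involves a single floor $\lfloor\log_2 N\rfloor$ rather than the two distinct floors $k$ and $m$ of Lemma~\ref{lemma2}, and it applies to $N=n+2^{m+1}\geqslant 2$ even when $n=0$, where Lemma~\ref{lemma1} (stated for positive $n$) would need a separate check --- at the cost of resting on the heavier Theorem~\ref{general}, which the paper itself derives through Theorems~\ref{digit0} and~\ref{half}. There is no circularity, since both \ref{general} and \ref{main} are established before and independently of Theorem~\ref{initial_identity}, so your argument stands as a valid alternative derivation.
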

\begin{proof}
    This comes from combining again the two identities~\ref{lemma1} and~\ref{lemma2}.
\end{proof}
\begin{corollary}\label{majorlink}
    For every dyadic rational $\xi=n/2^k$ in $\left[0,1\right]$,
    \[
        \tau(\xi) = 2+2\xi - \displaystyle\frac{ 1 + {\normalfont\texttt{A268289}}_{2^{k+1}+2^k+n-1} }{ 2^k } \,\textrm{.}
    \]
\end{corollary}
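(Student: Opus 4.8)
The plan is to derive this corollary directly from Theorem~\ref{initial_identity} by a single change of variable, in exactly the same spirit that Corollary~\ref{majorlink2} is read off from Theorem~\ref{general}. Theorem~\ref{initial_identity} expresses $\tau$ evaluated at the argument $(N+1)2^{-m}-1$ in terms of the sequence, where $N$ denotes the running index and $m=\lfloor\log_2(N+1)\rfloor$; so the first step is to choose $N$ so that this argument coincides with the prescribed $\xi=n/2^k$. Setting $N=n+2^k-1$ forces $(N+1)2^{-k}-1=n/2^k=\xi$, so I would substitute this value of $N$ and take $m=k$ in the theorem.

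Before substituting I must check that the choice $m=k$ is consistent, that is, that $\lfloor\log_2(N+1)\rfloor$ really equals $k$. With $N+1=n+2^k$ and $0\leqslant n<2^k$ (the case $\xi\in[0,1)$) one has $2^k\leqslant N+1<2^{k+1}$, hence $\lfloor\log_2(N+1)\rfloor=k$ as required, so the formula is legitimately applicable on the whole half-open interval. The index appearing on the left-hand side of Theorem~\ref{initial_identity} is then $N+2^{m+1}=(n+2^k-1)+2^{k+1}=2^{k+1}+2^k+n-1$, which already matches the index displayed in the corollary.

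Substituting and solving for $\tau(\xi)$ is now purely mechanical. Theorem~\ref{initial_identity} becomes
\[
    {\normalfont\texttt{A268289}}_{2^{k+1}+2^k+n-1} = 2(n+2^k-1) - 2^k\tau(\xi) + 1,
\]
so isolating the Takagi term gives $2^k\tau(\xi)=2n+2^{k+1}-1-{\normalfont\texttt{A268289}}_{2^{k+1}+2^k+n-1}$, and dividing by $2^k$ together with $2n/2^k=2\xi$ yields the announced identity.

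The only genuine obstacle is the endpoint $\xi=1$, i.e.\ $n=2^k$, where the substitution fails: here $N+1=2^{k+1}$ would force $m=k+1$ rather than $m=k$, so the clean argument breaks down. I would dispose of this single case by direct evaluation. When $n=2^k$ the index becomes $2^{k+1}+2^k+2^k-1=2^{k+2}-1$, and using the value ${\normalfont\texttt{A268289}}_{2^{j}-1}=2^{j}-1$ established inside the proof of Lemma~\ref{lemma1} (with $j=k+2$), the right-hand side collapses to $2+2-4=0$, which agrees with $\tau(1)=0$. Hence the identity extends to the closed interval $[0,1]$, completing the proof.
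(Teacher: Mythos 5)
Your proof is correct and is exactly the substitution the paper intends (the corollary is stated without proof, as an immediate consequence of Theorem~\ref{initial_identity} via $N=n+2^k-1$, $m=k$). Your explicit verification that $\lfloor\log_2(N+1)\rfloor=k$ and your separate treatment of the endpoint $\xi=1$, where that verification fails, are careful touches the paper omits.
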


\section{Main identities related to \texttt{A268289}}

\begin{theorem}\label{oeis1}
    Let $n$ and $k$ be some nonnegative integers such that $n\leqslant 2^k$,
    \[
        {\normalfont\texttt{A268289}}_{2^{k+2}-n-1}
        = 2^{k+1} - 4n + {\normalfont\texttt{A268289}}_{2^{k+1}+2^k+n-1}
        \,\textrm{.}
    \]
\end{theorem}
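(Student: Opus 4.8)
The plan is to evaluate both sides through Corollary~\ref{majorlink}, which expresses \texttt{A268289} at indices of the shape $2^{k'+1}+2^{k'}+n'-1$ in terms of a single Takagi value $\tau(n'/2^{k'})$. The right-hand side is already of this exact shape with $k'=k$ and $n'=n$, so Corollary~\ref{majorlink} applied with $\xi=n/2^k$ gives directly
\[
    {\normalfont\texttt{A268289}}_{2^{k+1}+2^k+n-1} = 2^{k+1}+2n-2^k\,\tau\left(n/2^k\right)-1 .
\]
For the left-hand side I would rewrite the index $2^{k+2}-n-1$ into the same template by setting $k'=k$ and $n'=2^k-n$; one checks $2^{k+1}+2^k+(2^k-n)-1=2^{k+2}-n-1$, so the very same corollary applies, now with $\xi=1-n/2^k$, yielding
\[
    {\normalfont\texttt{A268289}}_{2^{k+2}-n-1} = 2^{k+2}-2n-2^k\,\tau\left(1-n/2^k\right)-1 .
\]

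The decisive simplification is then the reflection symmetry $\tau(x)=\tau(1-x)$ of the Takagi function, a standard property recorded in~\cite{lagarias}. Applying it turns $\tau(1-n/2^k)$ into $\tau(n/2^k)$, so both displayed expressions carry the identical term $2^k\,\tau(n/2^k)$. Subtracting them, this Takagi term cancels and only the elementary contributions remain, leaving $2^{k+2}-2^{k+1}-4n=2^{k+1}-4n$, which is precisely the asserted identity. Notably, this route never needs Theorem~\ref{general} nor a change of dyadic scale: both evaluations stay at level $2^k$, and the whole argument reduces to a one-line cancellation once the symmetry is in place.

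The one point demanding care is the admissibility of the second application of Corollary~\ref{majorlink}: I must confirm that $\xi=1-n/2^k$ lies in $[0,1]$, equivalently that $n'=2^k-n$ is a nonnegative integer bounded above by $2^k$. This is exactly what the hypothesis $0\leqslant n\leqslant 2^k$ guarantees, and it also covers the boundary values $n=0$ and $n=2^k$, where one invokes $\tau(1)=\tau(0)=0$. I therefore expect the main obstacle to be purely bookkeeping --- matching $2^{k+2}-n-1$ to the corollary's index pattern and verifying the range condition for $n'$ --- together with the need to import the reflection symmetry, which, although not among the results proved above, is a well-known elementary property of $\tau$.
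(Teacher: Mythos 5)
Your proof is correct and follows essentially the same route as the paper: the paper's own proof consists precisely of applying the reflection symmetry $\tau(\xi)=\tau(1-\xi)$ (Theorem~4.1 in \cite{lagarias}) to Corollary~\ref{majorlink}, which is exactly the computation you carry out, just written in full detail with the substitution $n'=2^k-n$ made explicit.
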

\begin{proof}
    This merely comes from applying the fundamental functional identity $\tau(\xi)=\tau(1-\xi)$ (see theorem~4.1 in \cite{lagarias}) to the identity~\ref{majorlink}.
\end{proof}

\begin{theorem}\label{oeis2}
    Let $n$ and $k$ be some nonnegative integers such that $n\leqslant 2^k$,
    \[
        {\normalfont\texttt{A268289}}_{2^{k+2}+2^{k+1}+n-1} 
        =
        2^{k+1} - n +  {\normalfont\texttt{A268289}}_{2^{k+1}+2^k+n-1} 
      \,\textrm{.}
    \]
\end{theorem}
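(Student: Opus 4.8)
The plan is to rewrite both cumulated digit sums appearing in the statement through the Takagi function by invoking Corollary~\ref{majorlink} twice, exactly as Theorem~\ref{oeis1} does, but exploiting the self-similarity of~$\tau$ rather than its reflection symmetry. Setting $\xi=n/2^k$, the hypothesis $n\leqslant 2^k$ guarantees $\xi\in[0,1]$, and since moreover $n\leqslant 2^{k+1}$ the value $\xi/2=n/2^{k+1}$ is an admissible dyadic rational as well. Applying Corollary~\ref{majorlink} with the pair $(k,n)$ gives ${\normalfont\texttt{A268289}}_{2^{k+1}+2^k+n-1}=2^{k+1}+2n-2^k\tau(\xi)-1$, while applying it with the shifted pair $(k+1,n)$ --- which turns the index $2^{k+1}+2^k+n-1$ into $2^{k+2}+2^{k+1}+n-1$ --- gives ${\normalfont\texttt{A268289}}_{2^{k+2}+2^{k+1}+n-1}=2^{k+2}+2n-2^{k+1}\tau(\xi/2)-1$.

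Subtracting these two expressions, the terms in~$2n$ and the constant~$-1$ cancel, and the claim reduces to the single identity $2^{k+1}-2^{k+1}\tau(\xi/2)+2^k\tau(\xi)=2^{k+1}-n$, that is, after dividing by~$2^k$ and recalling $\xi=n/2^k$, to $\tau(\xi/2)=\tfrac{\xi}{2}+\tfrac{1}{2}\tau(\xi)$. This is the halving (self-similarity) relation for the Takagi function, so the whole theorem collapses to establishing this one functional equation on~$[0,1]$.

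I expect the only real work to be the justification of that halving relation; everything else is bookkeeping on the indices. One route is to quote it directly from Lagarias~\cite{lagarias}. A self-contained route, staying within the tools already proved here, is to specialise Theorem~\ref{main} to $m=1$, which yields $\tau\bigl((\xi+1)/2\bigr)=\tfrac{1}{2}\bigl(1-\xi+\tau(\xi)\bigr)$ for $\xi\in[0,1]$, and then to apply the reflection symmetry $\tau(x)=\tau(1-x)$ (Theorem~4.1 in~\cite{lagarias}): writing $\tau((\xi+1)/2)=\tau((1-\xi)/2)$ and substituting $\eta=1-\xi$ transforms the formula into $\tau(\eta/2)=\tfrac{\eta}{2}+\tfrac{1}{2}\tau(\eta)$, which is exactly what is needed. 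The potential pitfall is keeping the domain of each application correct (each of $\xi$, $\xi/2$ and $(\xi+1)/2$ must lie in the interval where the invoked identity holds) and confirming that the index substitution $k\mapsto k+1$ respects the hypothesis $n\leqslant 2^{k+1}$ of Corollary~\ref{majorlink}; both are immediate from $n\leqslant 2^k$.
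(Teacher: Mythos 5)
Your proposal is correct and follows essentially the same route as the paper: the paper's proof is precisely ``apply the functional identity $\tau(\xi/2)=\xi/2+\tau(\xi)/2$ to Corollary~\ref{majorlink}'', which is what your two instantiations of that corollary (at $(k,n)$ and $(k+1,n)$) amount to after the bookkeeping. Your additional self-contained derivation of the halving relation from Theorem~\ref{main} with $m=1$ together with the reflection symmetry is a valid bonus, though the paper simply cites Theorem~4.1 of Lagarias for it.
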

\begin{proof}
    This merely comes from applying the fundamental functional identity $\tau(\xi/2)=\xi/2+\tau(\xi)/2$ (see theorem~4.1 in \cite{lagarias}) to the identity~\ref{majorlink}.
\end{proof}

\begin{theorem}\label{oeis4}
    Let $n$ and $k$ be some nonnegative integers such that $n\leqslant 2^k$,
    \[
        {\normalfont\texttt{A268289}}_{2^{k+1}-n-1}
        =2^k-2n+
        {\normalfont\texttt{A268289}}_{2^k+n-1}
        \,\textrm{.}
    \]
\end{theorem}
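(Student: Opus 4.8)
The plan is to mirror the structure of Theorem~\ref{oeis1}, where the new term \texttt{A268289}$_{2^{k+1}-n-1}$ is produced by applying one of the fundamental functional identities of the Takagi function to a known closed form. Here the target involves \texttt{A268289}$_{2^{k+1}-n-1}$ and \texttt{A268289}$_{2^k+n-1}$, both of which have the shape $\texttt{A268289}_{N+2^j-1}$ with the argument lying naturally in the scope of Corollary~\ref{majorlink2}. So the first step is to rewrite each side using Corollary~\ref{majorlink2}: setting $\xi=n/2^k$, I would express $\texttt{A268289}_{2^k+n-1}$ in terms of $\tau(\xi)$, and then identify the index $2^{k+1}-n-1$ as $(2^k-n)+2^k-1$, which by the same corollary with $\xi'=(2^k-n)/2^k=1-\xi$ gives an expression in terms of $\tau(1-\xi)$.

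Having both sides written as affine functions of $\tau(\xi)$ and $\tau(1-\xi)$ respectively, the second step is to invoke the reflection symmetry $\tau(\xi)=\tau(1-\xi)$ (theorem~4.1 in \cite{lagarias}), exactly as Theorem~\ref{oeis1} does. This collapses the two Takagi evaluations to a single common value, after which the claimed identity should reduce to a purely algebraic relation between the two \texttt{A268289} terms and the elementary quantities $2^k$ and $n$. The final step is then routine simplification: expand both closed forms, substitute $\xi=n/2^k$, cancel the shared $\tau$ contribution, and verify that the residual affine terms match $2^k-2n$ on the nose.

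The main obstacle I anticipate is purely bookkeeping rather than conceptual: Corollary~\ref{majorlink2} is stated for $\xi=n/2^k\in[0,1]$, so I must confirm that both substitutions $\xi=n/2^k$ and $\xi'=1-\xi=(2^k-n)/2^k$ land in $[0,1]$ under the hypothesis $n\leqslant 2^k$, and that the indices $2^{k+1}-n-1$ and $2^k+n-1$ are genuinely of the form $M+2^k-1$ with $M=2^k-n$ and $M=n$ respectively, so the corollary applies verbatim. The boundary cases $n=0$ and $n=2^k$ (where $\xi$ or $1-\xi$ hits an endpoint of the interval) deserve a quick separate check, but the continuity of $\tau$ and the fact that Corollary~\ref{majorlink2} holds for all such dyadic rationals should make these unproblematic. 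Once the interval membership is pinned down, the reflection identity and the algebra carry the proof through essentially as in Theorems~\ref{oeis1} and~\ref{oeis2}.
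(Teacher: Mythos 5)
Your proposal is correct and follows exactly the paper's route: the paper's proof of this theorem is precisely the application of the reflection identity $\tau(\xi)=\tau(1-\xi)$ to Corollary~\ref{majorlink2} with $\xi=n/2^k$ and $1-\xi=(2^k-n)/2^k$, and the algebra works out as you describe.
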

\begin{proof}
    This merely comes from applying the fundamental functional identity $\tau(\xi)=\tau(1-\xi)$ (see theorem~4.1 in \cite{lagarias}) to the identity~\ref{majorlink2}.
\end{proof}

\begin{theorem}\label{oeis5}
    Let $n$ and $k$ be some nonnegative integers such that $n\leqslant 2^k$,
    \[
        {\normalfont\texttt{A268289}}_{2^{k+1}+n-1}
        =2^k - n+
        {\normalfont\texttt{A268289}}_{2^k+n-1}
        \,\textrm{.}
    \]
\end{theorem}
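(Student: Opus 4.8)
The plan is to mirror the strategy already used for Theorems~\ref{oeis1}, \ref{oeis2} and~\ref{oeis4}: rewrite both occurrences of \texttt{A268289} through the Takagi function by means of Corollary~\ref{majorlink2}, and then collapse the resulting expression with one of the fundamental functional equations of~$\tau$. Here the relevant functional equation is the halving rule $\tau(\xi/2)=\xi/2+\tau(\xi)/2$ (theorem~4.1 in~\cite{lagarias}), applied to the identity~\ref{majorlink2}. Indeed, the two indices appearing in the statement differ only by the replacement of the power $2^k$ with $2^{k+1}$, which corresponds exactly to halving the Takagi argument.

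First I would apply Corollary~\ref{majorlink2} twice. Writing the left-hand index as $n+2^{k+1}-1$ and using the dyadic rational $\xi=n/2^{k+1}$, which lies in $[0,1]$ because $n\leqslant 2^k\leqslant 2^{k+1}$, I obtain
\[
    {\normalfont\texttt{A268289}}_{2^{k+1}+n-1}
    = 2^{k+1}+n-2^{k+1}\tau\left(n/2^{k+1}\right)-1 \,\textrm{.}
\]
Likewise, writing the right-hand index as $n+2^k-1$ with $\xi=n/2^k\in[0,1]$ (the hypothesis $n\leqslant 2^k$ being exactly what is needed to stay in the domain), I get
\[
    {\normalfont\texttt{A268289}}_{2^k+n-1}
    = 2^k+n-2^k\tau\left(n/2^k\right)-1 \,\textrm{.}
\]

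Next I would subtract these two expressions and substitute the halving rule instantiated at $\xi=n/2^k$, which gives $2^{k+1}\tau\left(n/2^{k+1}\right)=n+2^k\tau\left(n/2^k\right)$. The two Takagi terms then cancel and the difference of the integers collapses to $2^k-n$, which is precisely the claimed relation.

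The computation is entirely routine, so the only thing demanding care~---~the ``obstacle'', such as it is~---~is the bookkeeping: one must verify that both invocations of Corollary~\ref{majorlink2} respect its domain constraint $\xi\in[0,1]$, and that the halving rule is applied at the argument $\xi=n/2^k$ (so that $\xi/2=n/2^{k+1}$ is exactly the argument produced by the first expansion) rather than the other way round. Once the arguments are matched correctly the cancellation of the $\tau$ terms is immediate, and no continuity or limiting argument is needed since everything is evaluated at genuine dyadic rationals.
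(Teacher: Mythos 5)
Your proof is correct and follows exactly the paper's route: the paper's own (one-line) proof is precisely "apply the functional identity $\tau(\xi/2)=\xi/2+\tau(\xi)/2$ to Corollary~\ref{majorlink2}," which is what you carry out in detail. The arithmetic checks out, including the domain verification $n\leqslant 2^k$ and the matching of arguments $\xi=n/2^k$ versus $\xi/2=n/2^{k+1}$.
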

\begin{proof}
    This merely comes from applying the fundamental functional identity $\tau(\xi/2)=\xi/2+\tau(\xi)/2$ (see theorem~4.1 in \cite{lagarias}) to the identity~\ref{majorlink2}.
\end{proof}

\begin{theorem}\label{oeis6}
    Let $n$ and $k$ be some nonnegative integers such that $n\leqslant 2^k$,
    \[
        {\normalfont\texttt{A268289}}_{2^{k+1}+n-1}
        = n+
        {\normalfont\texttt{A268289}}_{2^{k+1}-n-1}
        \,\textrm{.}
    \]
\end{theorem}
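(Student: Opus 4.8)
The plan is to notice that Theorems~\ref{oeis4} and~\ref{oeis5} both express a term of \texttt{A268289} in terms of the very same reference term $\texttt{A268289}_{2^k+n-1}$, and under the identical hypothesis $n\leqslant 2^k$. The target identity relates $\texttt{A268289}_{2^{k+1}+n-1}$ to $\texttt{A268289}_{2^{k+1}-n-1}$, which are precisely the two left-hand sides of those theorems. So the whole proof should reduce to eliminating the shared term by subtraction.

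Concretely, I would first recall the two identities side by side,
\[
    \texttt{A268289}_{2^{k+1}+n-1} = 2^k - n + \texttt{A268289}_{2^k+n-1}, \qquad
    \texttt{A268289}_{2^{k+1}-n-1} = 2^k - 2n + \texttt{A268289}_{2^k+n-1},
\]
the first being Theorem~\ref{oeis5} and the second Theorem~\ref{oeis4}. Subtracting the second from the first cancels both the common term $\texttt{A268289}_{2^k+n-1}$ and the common summand $2^k$, leaving
\[
    \texttt{A268289}_{2^{k+1}+n-1} - \texttt{A268289}_{2^{k+1}-n-1} = (2^k-n)-(2^k-2n) = n,
\]
which rearranges into the claimed identity.

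There is no genuine obstacle here: the argument is a single elimination of a shared quantity, and the only things to check are the bookkeeping of signs in the constant terms and the fact that the hypothesis $n\leqslant 2^k$ required by Theorems~\ref{oeis4} and~\ref{oeis5} coincides exactly with the hypothesis of the present statement, so no extra range restriction is introduced. It is worth remarking that this makes Theorem~\ref{oeis6} a genuine \emph{corollary} of the two preceding results rather than an independent appeal to a Takagi functional equation; in particular, the reflection $\tau(\xi)=\tau(1-\xi)$ and the halving rule $\tau(\xi/2)=\xi/2+\tau(\xi)/2$ each enter only once, through Theorems~\ref{oeis4} and~\ref{oeis5} respectively, and their combined effect is what produces the clean coefficient $n$ on the right-hand side.
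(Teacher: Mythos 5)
Your proof is correct and follows exactly the paper's route: the paper also derives Theorem~\ref{oeis6} by combining Theorems~\ref{oeis4} and~\ref{oeis5}, and your subtraction eliminating the shared term $\texttt{A268289}_{2^k+n-1}$ is precisely the intended computation. No issues.
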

\begin{proof}
    This comes from combining theorems~\ref{oeis4} and~\ref{oeis5}.
\end{proof}

\begin{theorem}\label{oeis7}
    Let $n$ and $k$ be some nonnegative integers such that $n\leqslant 2^k$,
    \[
        {\normalfont\texttt{A268289}}_{2^{k+1}+2^k+n-1}
        = 2n 
        + {\normalfont\texttt{A268289}}_{2^{k+1}+n-1}
        \,\textrm{.}
    \]
\end{theorem}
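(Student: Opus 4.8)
The plan is to eliminate the Takagi function between the two corollaries and then match indices with Theorem~\ref{oeis5}. Both corollaries are stated for the dyadic rational $\xi = n/2^k\in[0,1]$, and the hypothesis $n\leqslant 2^k$ guarantees precisely this membership, so I may invoke both simultaneously with one and the same $\xi$. The whole argument is therefore a matter of writing each occurring sequence term as an affine expression in the single value $\tau(n/2^k)$ and subtracting.

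First I would rewrite Corollary~\ref{majorlink}, solved for the sequence term and using $2^k\xi=n$, as
\[
    {\normalfont\texttt{A268289}}_{2^{k+1}+2^k+n-1} = 2^{k+1} + 2n - 2^k\tau(\xi) - 1.
\]
This is the only place the left-hand index $2^{k+1}+2^k+n-1$ enters, and it carries the Takagi value $\tau(\xi)$. Next I would produce a companion expression for the right-hand index $2^{k+1}+n-1$ that involves the very same $\tau(\xi)$. Corollary~\ref{majorlink2}, solved for its term and again using $2^k\xi=n$, gives
\[
    {\normalfont\texttt{A268289}}_{2^k+n-1} = 2^k + n - 2^k\tau(\xi) - 1,
\]
and Theorem~\ref{oeis5} raises this index via ${\normalfont\texttt{A268289}}_{2^{k+1}+n-1} = 2^k - n + {\normalfont\texttt{A268289}}_{2^k+n-1}$, yielding
\[
    {\normalfont\texttt{A268289}}_{2^{k+1}+n-1} = 2^{k+1} - 2^k\tau(\xi) - 1.
\]

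Subtracting this last display from the first cancels the term $2^k\tau(\xi)$ and the constant $-1$ at once, leaving exactly $2n$, which is the asserted identity. The computation is entirely routine; the only point requiring care is the bookkeeping of the three indices $2^{k+1}+2^k+n-1$, $2^k+n-1$, and $2^{k+1}+n-1$ so that the single Takagi value $\tau(n/2^k)$ appears identically in the two expressions being subtracted. I expect no genuine analytic obstacle here, since the continuity of $\tau$ has already been absorbed into the corollaries; the statement reduces to a pure consequence of the dyadic formulas, and the hard part is merely ensuring the three-way index alignment is performed correctly.
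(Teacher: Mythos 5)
Your argument is correct, but it follows a genuinely different path from the paper's. The paper's proof obtains Theorem~\ref{oeis7} by combining Theorems~\ref{oeis1} and~\ref{oeis4}: replacing $k$ by $k+1$ in Theorem~\ref{oeis4} makes its left-hand index $2^{k+2}-n-1$ coincide with that of Theorem~\ref{oeis1}, and subtracting the two relations yields the claim. Since those two theorems are themselves the images of Corollaries~\ref{majorlink} and~\ref{majorlink2} under the reflection identity $\tau(\xi)=\tau(1-\xi)$, the paper effectively routes the elimination of $\tau$ through the reflection symmetry. You instead eliminate $\tau(n/2^k)$ directly between the two corollaries, using Theorem~\ref{oeis5} (which rests on the scaling identity $\tau(\xi/2)=\xi/2+\tau(\xi)/2$) only to lift the index $2^k+n-1$ to $2^{k+1}+n-1$. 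Your three displayed affine expressions in $\tau(n/2^k)$ all check out, the hypothesis $n\leqslant 2^k$ does place $\xi=n/2^k$ in $[0,1]$ as required by both corollaries and by Theorem~\ref{oeis5}, and the subtraction gives exactly $2n$. What your route buys is that it never invokes the reflection functional equation at all, making the dependence on the two corollaries more transparent; what the paper's route buys is brevity within its own chain of results, since Theorems~\ref{oeis1} and~\ref{oeis4} are already on the shelf and align after a single reindexing. Both arguments are equally rigorous.
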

\begin{proof}
    This comes from combining theorems~\ref{oeis1} and~\ref{oeis4}.
\end{proof}

\begin{theorem}\label{oeis3}
    Let $n$ and $k$ be some nonnegative integers such that $n\leqslant 2^k$,
    \[
 {\normalfont\texttt{A268289}}_{2^{k+1}+2^k+n-1} 
        =
        3n +  {\normalfont\texttt{A268289}}_{2^{k+1}-n-1}
     \,\textrm{.}
    \]
\end{theorem}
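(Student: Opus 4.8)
The plan is to derive this identity purely by chaining two of the transformation rules already established for \texttt{A268289}, without returning to the Takagi function at all. The key observation is that the left-hand index $2^{k+1}+2^k+n-1$ of the claimed identity is exactly the left-hand index of Theorem~\ref{oeis7}, while the right-hand index $2^{k+1}-n-1$ is exactly the right-hand index of Theorem~\ref{oeis6}; the index $2^{k+1}+n-1$ serves as the bridge between them. So the whole result should fall out of a single substitution.

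First I would invoke Theorem~\ref{oeis7}, valid for $n\leqslant 2^k$, which reads
\[
    \texttt{A268289}_{2^{k+1}+2^k+n-1} = 2n + \texttt{A268289}_{2^{k+1}+n-1}\,\textrm{.}
\]
Then I would rewrite the bridging term $\texttt{A268289}_{2^{k+1}+n-1}$ by means of Theorem~\ref{oeis6}, which holds under the very same hypothesis $n\leqslant 2^k$, namely
\[
    \texttt{A268289}_{2^{k+1}+n-1} = n + \texttt{A268289}_{2^{k+1}-n-1}\,\textrm{.}
\]
Substituting the second relation into the first merges the two linear corrections $2n$ and $n$ into $3n$ and yields immediately
\[
    \texttt{A268289}_{2^{k+1}+2^k+n-1} = 3n + \texttt{A268289}_{2^{k+1}-n-1}\,\textrm{,}
\]
which is the asserted identity.

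Because both auxiliary theorems share the identical hypothesis $n\leqslant 2^k$, the composition introduces no new constraint on the admissible pairs $(n,k)$, and the substitution is legitimate throughout the stated range. The only point requiring any care --- and I expect it to be the main, quite minor, obstacle --- is the bookkeeping verification that the middle index $2^{k+1}+n-1$ produced on the right of Theorem~\ref{oeis7} coincides exactly with the index transformed by Theorem~\ref{oeis6}, so that the chaining is an exact algebraic identity rather than an approximation. Once that alignment is confirmed, no genuine computation remains and the result follows at once.
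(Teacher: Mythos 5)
Your proof is correct and is exactly the paper's own argument: the author likewise obtains the identity by substituting Theorem~\ref{oeis6} into Theorem~\ref{oeis7} via the bridging index $2^{k+1}+n-1$, under the shared hypothesis $n\leqslant 2^k$. Nothing further is needed.
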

\begin{proof}
    This comes from combining theorems~\ref{oeis6} and~\ref{oeis7}.
\end{proof}

\begin{theorem}\label{oeis8}
    Let $n$ and $k$ be some nonnegative integers such that $n\leqslant 2^k$,
    \[
        {\normalfont\texttt{A268289}}_{2^{k+2} +2^k-n-1}
        =
        2^k+n
        + {\normalfont\texttt{A268289}}_{2^{k+1}+n-1}
        \,\textrm{.}
    \]
\end{theorem}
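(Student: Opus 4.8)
The plan is to combine Theorems~\ref{oeis5} and~\ref{oeis6}, reducing the left-hand index $2^{k+2}+2^k-n-1$ to the block at level $k+1$ and then exploiting a reflection symmetry inside that block. First I would apply Theorem~\ref{oeis5} after the substitution $k\mapsto k+1$, $n\mapsto 2^k-n$; the hypothesis survives because $0\leqslant 2^k-n\leqslant 2^k\leqslant 2^{k+1}$. This rewrites the target as
\[
    \texttt{A268289}_{2^{k+2}+2^k-n-1}
    = 2^k+n+\texttt{A268289}_{2^{k+1}+2^k-n-1}\,,
\]
so that the whole statement now lives at level $k+1$, and it only remains to identify $\texttt{A268289}_{2^{k+1}+2^k-n-1}$ with $\texttt{A268289}_{2^{k+1}+n-1}$.

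Next I would establish that missing reflection identity. Applying Theorem~\ref{oeis6} with $n\mapsto 2^k-n$ gives $\texttt{A268289}_{2^{k+1}+2^k-n-1}=(2^k-n)+\texttt{A268289}_{2^k+n-1}$, while Theorem~\ref{oeis5} in its original form gives $\texttt{A268289}_{2^{k+1}+n-1}=(2^k-n)+\texttt{A268289}_{2^k+n-1}$. Since the two right-hand sides coincide, so do the two left-hand sides, whence $\texttt{A268289}_{2^{k+1}+2^k-n-1}=\texttt{A268289}_{2^{k+1}+n-1}$. Substituting this equality into the displayed equation produces exactly the claimed formula.

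Every manipulation here is linear, so I expect no analytic difficulty; the only delicate point is the bookkeeping of the two substitutions (the combined shift $k\mapsto k+1$, $n\mapsto 2^k-n$ in the first step, and the shift $n\mapsto 2^k-n$ alone in the second) together with verifying that each invocation still meets the hypothesis $n\leqslant 2^k$ of the quoted theorems. Once those ranges are confirmed, the result follows by direct substitution.
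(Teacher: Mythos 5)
Your proof is correct: both substitutions are carried out accurately, the hypothesis $n'\leqslant 2^{k'}$ is verified at each invocation, and the arithmetic $2^{k+1}-(2^k-n)=2^k+n$ and $(2^k-n)+2n=2^k+n$ checks out. The route differs slightly from the paper's. The paper substitutes $k'=k+1$, $n'=2^k-n$ into Theorem~\ref{oeis6} so that its left index becomes $2^{k+2}+2^k-n-1$ and its right index becomes $2^{k+1}+2^k+n-1$, then finishes with Theorem~\ref{oeis7}; you instead substitute the same parameters into Theorem~\ref{oeis5} and then close the gap with the reflection identity ${\normalfont\texttt{A268289}}_{2^{k+1}+2^k-n-1}={\normalfont\texttt{A268289}}_{2^{k+1}+n-1}$, which you derive on the spot from Theorems~\ref{oeis5} and~\ref{oeis6}. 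Since Theorems~\ref{oeis6} and~\ref{oeis7} are themselves built from \ref{oeis4} and \ref{oeis5}, both arguments draw on the same web of linear identities and are of equal difficulty; what your version buys is that intermediate reflection identity, a clean symmetry of the sequence within the block $[2^{k+1},2^{k+1}+2^k)$ (the discrete shadow of $\tau(\xi)=\tau(1-\xi)$) that is worth stating explicitly, whereas the paper's version is a more direct two-step chain.
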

\begin{proof}
    This comes by choosing some $n'$ and $k'$ such that index~$2^{k'+1}-n'-1$ in identity~\ref{oeis6} matches index~$2^{k+1}+2^k+n-1$ in identity~\ref{oeis7}.
\end{proof}

\section{Regular iterations over specific indices}

\begin{theorem}\label{oeis9}
    Let $n$ and $k$ be some nonnegative integers such that $n\leqslant 2^k$,
    \[
        {\normalfont\texttt{A268289}}_{2^{k+3} +2^k+n-1}
        =
        2^{k+2}
        + {\normalfont\texttt{A268289}}_{2^{k+1}+n-1}
        \,\textrm{.}
    \]
\end{theorem}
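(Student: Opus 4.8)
The plan is to obtain this identity by applying Theorem~\ref{oeis8} twice, at two successive levels, in the same index-matching spirit as the proof of Theorem~\ref{oeis8} itself. Reading the left-hand index of Theorem~\ref{oeis8} at a generic level $\kappa$ with parameter $m$ as $2^{\kappa+2}+2^{\kappa}-m-1$, I would first choose $\kappa=k+1$ and $m=2^{k}-n$. The admissibility requirement $0\leqslant m\leqslant 2^{\kappa}$ holds because $0\leqslant n\leqslant 2^{k}$, and a short computation gives $2^{k+3}+2^{k+1}-(2^{k}-n)-1=2^{k+3}+2^{k}+n-1$, which is exactly the index on the left of the statement to be proved.

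Applying Theorem~\ref{oeis8} at this level then yields
\[
    {\normalfont\texttt{A268289}}_{2^{k+3}+2^{k}+n-1}
    = \left(3\cdot 2^{k}-n\right)
    + {\normalfont\texttt{A268289}}_{2^{k+2}+2^{k}-n-1}\,,
\]
because the additive term $2^{\kappa}+m$ equals $2^{k+1}+2^{k}-n=3\cdot 2^{k}-n$ and the newly produced right-hand index $2^{\kappa+1}+m-1$ equals $2^{k+2}+2^{k}-n-1$. The key observation is that this right-hand index is again of the form $2^{k'+2}+2^{k'}-m'-1$, now with $k'=k$ and $m'=n$; in other words it coincides with the left-hand index of Theorem~\ref{oeis8} taken at level $k$ with parameter $n$.

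A second application of Theorem~\ref{oeis8}, this time directly in its stated form, gives
\[
    {\normalfont\texttt{A268289}}_{2^{k+2}+2^{k}-n-1}
    = \left(2^{k}+n\right)
    + {\normalfont\texttt{A268289}}_{2^{k+1}+n-1}\,.
\]
Substituting this into the preceding display and using $\left(3\cdot 2^{k}-n\right)+\left(2^{k}+n\right)=2^{k+2}$ yields the claimed formula. The main thing to watch is the bookkeeping of the two level/parameter substitutions and the admissibility constraints at each step (notably $0\leqslant 2^{k}-n\leqslant 2^{k+1}$ at the first application); since Theorem~\ref{oeis8} already covers its own endpoints, no separate boundary analysis is needed. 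As an independent check, one can reduce the whole identity through Theorem~\ref{general} and the halving relation $\tau(\xi/2)=\xi/2+\tau(\xi)/2$ to the single Takagi identity $\tau\!\left((1+\eta)/8\right)=\tfrac{3}{8}+\tfrac14\,\tau(\eta/2)$ with $\eta=n/2^{k}$, which follows at once from Theorem~\ref{main} taken with $m=3$.
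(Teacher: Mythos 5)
Your proof is correct and takes essentially the same route as the paper: the paper's own (very terse) proof is precisely ``iterate Theorem~\ref{oeis8} twice, using $n\gets 2^k-n$ and $k\gets k+1$ for the second step,'' which is exactly the pair of substitutions you carry out, merely written in the opposite order and with the bookkeeping and admissibility checks made explicit. The concluding Takagi-function verification is a correct but optional extra.
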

\begin{proof}
    This comes by iterating twice with theorem~\ref{oeis8}, using $n\gets 2^k-n$ and $k\gets k+1$ for the second step.
\end{proof}

\begin{theorem}\label{oeis11}
    Let $n$ and $k$ be some nonnegative integers such that $n\leqslant 2^k$,
    \[
        {\normalfont\texttt{A268289}}_{2^{k+3} -2^k-n-1}
        =
        3\times 2^{k}
        + {\normalfont\texttt{A268289}}_{2^{k+1}-n-1}
        \,\textrm{.}
    \]
\end{theorem}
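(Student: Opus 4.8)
The plan is to derive this identity by combining theorem~\ref{oeis9} with two applications of the reflection identity of theorem~\ref{oeis6}, in direct analogy with the way theorem~\ref{oeis9} itself was obtained from theorem~\ref{oeis8}. The guiding observation is that both indices appearing in theorem~\ref{oeis9}, namely $2^{k+3}+2^k+n-1$ and $2^{k+1}+n-1$, are of the ``$+n$'' type, whereas the two indices in the present statement, $2^{k+3}-2^k-n-1$ and $2^{k+1}-n-1$, are exactly their ``$-n$'' mirrors; and theorem~\ref{oeis6} is precisely the tool that exchanges an index $2^{K+1}+N-1$ for its reflection $2^{K+1}-N-1$.

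First I would reflect the large index. Applying theorem~\ref{oeis6} with the substitution $N\gets 2^k+n$ and $K\gets k+2$ turns $2^{K+1}+N-1$ into $2^{k+3}+2^k+n-1$ and its mirror $2^{K+1}-N-1$ into exactly the target index $2^{k+3}-2^k-n-1$, yielding
\[
    {\normalfont\texttt{A268289}}_{2^{k+3}+2^k+n-1} = \left(2^k+n\right) + {\normalfont\texttt{A268289}}_{2^{k+3}-2^k-n-1}\,\textrm{.}
\]
The hypothesis $n\leqslant 2^k$ guarantees the admissibility condition $2^k+n\leqslant 2^{k+2}$ required by theorem~\ref{oeis6}. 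Next I would rewrite the left-hand side using theorem~\ref{oeis9}, replacing ${\normalfont\texttt{A268289}}_{2^{k+3}+2^k+n-1}$ by $2^{k+2}+{\normalfont\texttt{A268289}}_{2^{k+1}+n-1}$, and finally reflect the small index by applying theorem~\ref{oeis6} a second time with its plain parameters $(n,k)$ to replace ${\normalfont\texttt{A268289}}_{2^{k+1}+n-1}$ by $n+{\normalfont\texttt{A268289}}_{2^{k+1}-n-1}$.

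Substituting these three relations into one another and solving for ${\normalfont\texttt{A268289}}_{2^{k+3}-2^k-n-1}$ causes the two copies of $n$ to cancel and leaves the additive constant $2^{k+2}-2^k=3\times 2^k$, which is exactly the claimed result. I do not expect any genuine obstacle: each step is the substitution of a previously established identity, so the only point requiring care is the bookkeeping in the first application of theorem~\ref{oeis6} --- namely verifying that the single choice $(N,K)=(2^k+n,k+2)$ simultaneously reproduces the large index $2^{k+3}+2^k+n-1$ on one side and the sought index $2^{k+3}-2^k-n-1$ on the other, and that its admissibility hypothesis follows from $n\leqslant 2^k$.
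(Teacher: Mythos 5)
Your argument is correct, but it takes a different path through the paper's web of identities than the paper itself does. The paper proves this statement by iterating Theorem~\ref{oeis3} twice --- first with the plain parameters $(n,k)$ and then with $(2^k-n,\,k+1)$, so that the second left-hand index $2^{k+2}+2^{k+1}+2^k-n-1$ equals $2^{k+3}-2^k-n-1$ and the two $3n$ contributions cancel, leaving exactly $3\times 2^k$. You instead sandwich Theorem~\ref{oeis9} between two applications of the reflection identity of Theorem~\ref{oeis6}, once at level $K=k+2$ with $N=2^k+n$ and once at level $k$. Your bookkeeping checks out: the choice $(N,K)=(2^k+n,\,k+2)$ does simultaneously produce the indices $2^{k+3}+2^k+n-1$ and $2^{k+3}-2^k-n-1$, the admissibility condition $2^k+n\leqslant 2^{k+2}$ is implied by $n\leqslant 2^k$, and the constant $2^{k+2}-2^k=3\times 2^k$ emerges as you say. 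The two routes are of comparable length and ultimately rest on the same functional equations of the Takagi function (Theorem~\ref{oeis9} is built from Theorem~\ref{oeis8}, which in turn combines Theorems~\ref{oeis6} and~\ref{oeis7}); the paper's version is marginally more economical in that it invokes a single earlier theorem and exactly mirrors its own proof pattern for Theorem~\ref{oeis9}, while yours makes more explicit the reflection symmetry between the ``$+n$'' and ``$-n$'' families of indices. One cosmetic inaccuracy: the analogy you draw with the derivation of Theorem~\ref{oeis9} from Theorem~\ref{oeis8} is imperfect, since that derivation is a two-fold iteration with a parameter shift rather than a reflection argument --- it is in fact the paper's own proof of the present statement that follows that iteration pattern.
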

\begin{proof}
    This comes by iterating twice with theorem~\ref{oeis3}, using $n\gets 2^k-n$ and $k\gets k+1$ for the second step.
\end{proof}

\begin{theorem}\label{oeis10sum}
    Let $n$ and $k$ and $m$ be some nonnegative integers such that $n\leqslant 2^k$, then
    \[
        {\normalfont\texttt{A268289}}_{2^{k+2m+1} + 2^k(4^m-1)/3  +n-1}
        =
        2^{k+2}(4^m-1)/3
        + {\normalfont\texttt{A268289}}_{2^{k+1}+n-1}
        \,\textrm{.}
    \]
\end{theorem}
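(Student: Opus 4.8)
The plan is to prove this by induction on $m$, using theorem~\ref{oeis9} as the single iteration step. For $m=0$ the claimed identity collapses to $\texttt{A268289}_{2^{k+1}+n-1} = \texttt{A268289}_{2^{k+1}+n-1}$: the leading power $2^{k+2m+1}$ reduces to $2^{k+1}$, the summand $2^k(4^m-1)/3$ vanishes, and the prefactor $2^{k+2}(4^m-1)/3$ vanishes as well. This settles the base case at once.

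For the inductive step, I would assume the identity for a given $m$ (and all admissible $n,k$) and derive it for $m+1$. The key observation is that theorem~\ref{oeis9} is precisely the reindexing that advances $m$ by one. I would apply it with the substitution $k \gets k+2m$ and $n \gets 2^k(4^m-1)/3 + n$. With this choice the right-hand index $2^{(k+2m)+1} + n' - 1$ of theorem~\ref{oeis9} coincides with the index $2^{k+2m+1} + 2^k(4^m-1)/3 + n - 1$ appearing inside the inductive hypothesis, while the left-hand index $2^{(k+2m)+3} + 2^{k+2m} + n' - 1$ simplifies, using $2^{k+2m} + 2^k(4^m-1)/3 = 2^k(4^{m+1}-1)/3$, exactly to the $m+1$ index $2^{k+2m+3} + 2^k(4^{m+1}-1)/3 + n - 1$.

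Chaining theorem~\ref{oeis9} in this form with the inductive hypothesis then yields the $m+1$ statement, once the additive constants are matched: the constant $2^{(k+2m)+2}$ contributed by theorem~\ref{oeis9} combines with the constant $2^{k+2}(4^m-1)/3$ from the hypothesis to give $2^{k+2}(4^{m+1}-1)/3$, since $2^{k+2m+2} = 2^{k+2}4^m$ and $3\cdot 4^m + (4^m-1) = 4^{m+1}-1$. Both the index algebra and the constant algebra thus reduce to the same elementary identity $3\cdot 4^m + 4^m - 1 = 4^{m+1}-1$.

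I expect the main obstacle to be a bookkeeping point rather than a conceptual one: checking that the admissibility hypothesis $n' \leqslant 2^{k'}$ of theorem~\ref{oeis9} survives the substitution. This amounts to $2^k(4^m-1)/3 + n \leqslant 2^{k+2m} = 2^k 4^m$, which follows from $n \leqslant 2^k$ because $(4^m-1)/3 + 1 = (4^m+2)/3 \leqslant 4^m$ holds for every $m \geqslant 0$. With this inequality secured, the two matching identities (on the indices and on the constants) close the induction.
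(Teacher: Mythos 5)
Your proof is correct and follows exactly the route the paper intends: the paper's proof is the one-line remark that the identity ``comes from iterating $m$ times with the identity~\ref{oeis9}'', and your induction with the substitution $k\gets k+2m$, $n\gets 2^k(4^m-1)/3+n$ is precisely that iteration, written out with the index and constant bookkeeping (both reducing to $3\cdot 4^m+4^m-1=4^{m+1}-1$) and the admissibility check $n'\leqslant 2^{k'}$ that the paper leaves implicit.
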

\begin{proof}
    This comes from iterating $m$ times with the identity~\ref{oeis9}.
\end{proof}

\begin{theorem}\label{oeis12sum}
    Let $n$ and $k$ and $m$ be some nonnegative integers such that $n\leqslant 2^k$, then
    \[
        {\normalfont\texttt{A268289}}_{2^{k+2m+1} - 2^k(4^m-1)/3  -n-1}
        =
        2^{k}(4^m-1)
        + {\normalfont\texttt{A268289}}_{2^{k+1}-n-1}
        \,\textrm{.}
    \]
\end{theorem}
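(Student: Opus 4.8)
The plan is to make the phrase ``iterating $m$ times'' precise by an induction on $m$, exactly mirroring the proof of Theorem~\ref{oeis10sum} but driven by Theorem~\ref{oeis11} in place of Theorem~\ref{oeis9}. I would take $m=0$ as the trivial base case, where both sides collapse to $\texttt{A268289}_{2^{k+1}-n-1}$ because $2^k(4^0-1)/3=0$ and $2^k(4^0-1)=0$; equivalently, the case $m=1$ is literally the statement of Theorem~\ref{oeis11}. The substitution that drives the recursion is the parameter shift $(n,k)\mapsto(n+2^k,\,k+2)$, which is the ``subtractive'' analogue of the shift used for Theorem~\ref{oeis10sum}.

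For the inductive step I would assume the identity for $m$ (for \emph{every} admissible pair of parameters) and aim to produce it for $m+1$. First I would rewrite the target index $2^{k+2m+3}-2^k(4^{m+1}-1)/3-n-1$ as the left-hand index of the $m$-step identity evaluated at the shifted parameters $k''=k+2$ and $n''=n+2^k$; the only algebra needed here is the collapse $2^k(4^{m+1}-1)/3-2^{k+2}(4^m-1)/3=2^k$, which is what isolates $n''=n+2^k$. Applying the induction hypothesis at $(n'',k'')$ then rewrites the left-hand side as $2^{k+2}(4^m-1)+\texttt{A268289}_{2^{k+3}-2^k-n-1}$, since the remaining index $2^{k''+1}-n''-1$ simplifies to $2^{k+3}-2^k-n-1$.

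It then remains to peel off this last layer using Theorem~\ref{oeis11} at the \emph{original} parameters $(n,k)$, replacing $\texttt{A268289}_{2^{k+3}-2^k-n-1}$ by $3\times 2^k+\texttt{A268289}_{2^{k+1}-n-1}$, and to recombine the two accumulated constants through the geometric-series step $4\cdot 2^k(4^m-1)+3\times 2^k=2^k(4^{m+1}-1)$, which is precisely what produces the closed form $2^k(4^m-1)$ in the statement. The hard part — really the only place where care is required — is the bookkeeping of admissibility: each invocation of Theorem~\ref{oeis11} requires its own parameter to satisfy $n\leqslant 2^k$, so I must verify that the shift $(n,k)\mapsto(n+2^k,k+2)$ preserves the hypothesis, i.e.\ that $n+2^k\leqslant 2^{k+2}$, which follows at once from $n\leqslant 2^k$. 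Everything else is routine index arithmetic, and the perfect symmetry with the additive companion Theorem~\ref{oeis10sum} provides a running sanity check at each step.
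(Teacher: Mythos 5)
Your proposal is correct and is exactly what the paper's one-line proof (``iterating $m$ times with identity~\ref{oeis11}'') means, made precise as an induction on $m$ with the parameter shift $(n,k)\mapsto(n+2^k,k+2)$; the index collapse $2^{k+2}(4^m-1)/3+2^k=2^k(4^{m+1}-1)/3$, the constant recombination $4\cdot2^k(4^m-1)+3\times2^k=2^k(4^{m+1}-1)$, and the admissibility check $n+2^k\leqslant 2^{k+2}$ all check out. No discrepancy with the paper's approach.
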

\begin{proof}
    This comes from iterating $m$ times with the identity~\ref{oeis11}.
\end{proof}

\begin{theorem}\label{oeis14sum}
    Let $n$ and $k$ and $m$ be some nonnegative integers such that $n\leqslant 2^k$, then
    \[
        {\normalfont\texttt{A268289}}_{2^{k+m}  +n-1}
        =
        2^{k}(2^m-1) -mn
        + {\normalfont\texttt{A268289}}_{2^k+n-1}
        \,\textrm{.}
    \]
\end{theorem}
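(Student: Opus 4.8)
The plan is to obtain this identity by iterating theorem~\ref{oeis5}, which is precisely its single-step ($m=1$) case; accordingly I would argue by induction on~$m$. The base case $m=0$ is immediate, since $2^k(2^0-1)=0$ and the term $mn$ vanishes, so both sides reduce to ${\normalfont\texttt{A268289}}_{2^k+n-1}$.

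For the inductive step, suppose the identity holds for a given~$m$. I would apply theorem~\ref{oeis5} with $k$ replaced by $k+m$, which gives
\[
    {\normalfont\texttt{A268289}}_{2^{k+m+1}+n-1} = 2^{k+m} - n + {\normalfont\texttt{A268289}}_{2^{k+m}+n-1} \,\textrm{.}
\]
This application is legitimate because theorem~\ref{oeis5} only requires $n\leqslant 2^{k+m}$, which follows from the standing hypothesis $n\leqslant 2^k\leqslant 2^{k+m}$. Substituting the inductive hypothesis for ${\normalfont\texttt{A268289}}_{2^{k+m}+n-1}$ and simplifying by means of $2^{k+m}+2^k(2^m-1)=2^k(2^{m+1}-1)$ together with $-n-mn=-(m+1)n$ produces exactly the claimed expression at $m+1$.

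Equivalently, one could telescope the chain of instances of theorem~\ref{oeis5} at the successive levels $k,k+1,\dots,k+m-1$: the accumulated powers of two form the geometric sum $\sum_{j=0}^{m-1}2^{k+j}=2^k(2^m-1)$, while each of the $m$ steps contributes a single~$-n$, giving $-mn$ in total. I do not expect any genuine obstacle here; the only point deserving attention is that the side condition of theorem~\ref{oeis5} must remain satisfied at every intermediate level, and this is guaranteed throughout by $n\leqslant 2^k$. The remaining work is a routine collection of terms.
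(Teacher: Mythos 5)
Your proof is correct and is exactly the paper's argument: the paper simply states that the identity ``comes from iterating $m$ times with the identity~\ref{oeis5},'' which is the induction/telescoping you carry out explicitly. Your additional check that the side condition $n\leqslant 2^{k+j}$ persists at every level is a worthwhile detail the paper leaves implicit.
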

\begin{proof}
    This comes from iterating $m$ times with the identity~\ref{oeis5}.
\end{proof}

\begin{theorem}\label{oeis13sum}
    Let $n$ and $k$ and $m$ be some nonnegative integers such that $n\leqslant 2^k$, then
    \[
        {\normalfont\texttt{A268289}}_{2^{k+m+1} + 2^{k+m}  +n-1}
        =
        2^{k+1}(2^m-1) -mn
        + {\normalfont\texttt{A268289}}_{2^{k+1}+2^k+n-1}
        \,\textrm{.}
    \]
\end{theorem}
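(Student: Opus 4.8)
The plan is to iterate theorem~\ref{oeis2} exactly $m$ times, in perfect analogy with the way theorem~\ref{oeis14sum} iterates theorem~\ref{oeis5}. First I would rewrite theorem~\ref{oeis2} in a more transparent form by observing that $2^{k+2}+2^{k+1}=3\times 2^{k+1}$ and $2^{k+1}+2^k=3\times 2^k$; the identity then reads
\[
    {\normalfont\texttt{A268289}}_{3\times 2^{k+1}+n-1}
    = 2^{k+1}-n + {\normalfont\texttt{A268289}}_{3\times 2^{k}+n-1}\,\textrm{,}
\]
valid whenever $n\leqslant 2^k$. This exhibits theorem~\ref{oeis2} as a one-step recurrence relating the term whose leading part is $3\times 2^{k+1}$ to the term whose leading part is $3\times 2^{k}$, exactly as theorem~\ref{oeis5} relates leading part $2^{k+1}$ to $2^k$.

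Next I would set $T_j = {\normalfont\texttt{A268289}}_{3\times 2^{j}+n-1}$, so that the recurrence above becomes $T_{j+1}=T_j + 2^{j+1}-n$, valid for every $j\geqslant k$ under the standing hypothesis. Telescoping from $j=k$ to $j=k+m-1$ then gives
\[
    T_{k+m}-T_k = \sum_{j=k}^{k+m-1}\left(2^{j+1}-n\right)
    = 2^{k+1}\left(2^m-1\right) - mn\,\textrm{,}
\]
where the geometric sum $\sum_{j=k}^{k+m-1}2^{j+1}=2^{k+1}+2^{k+2}+\dots+2^{k+m}=2^{k+1}(2^m-1)$ collapses as expected. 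Rewriting $3\times 2^{k+m}=2^{k+m+1}+2^{k+m}$ and $3\times 2^k = 2^{k+1}+2^k$ in the definitions of $T_{k+m}$ and $T_k$ recovers precisely the claimed identity.

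The only point requiring care is the repeated applicability of theorem~\ref{oeis2} along the iteration: the step $T_j\to T_{j+1}$ invokes that theorem with the role of $k$ played by $j$, and hence demands $n\leqslant 2^j$. Since $j$ runs over $k,k+1,\dots,k+m-1$ while $n$ remains fixed, and since $n\leqslant 2^k\leqslant 2^j$ for all such $j$, the single standing hypothesis $n\leqslant 2^k$ already guarantees that every step is legitimate. I do not anticipate any genuine obstacle beyond this bookkeeping, the argument being a verbatim transcription of the proof of theorem~\ref{oeis14sum} with theorem~\ref{oeis2} substituted for theorem~\ref{oeis5}.
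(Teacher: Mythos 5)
Your proposal is correct and is precisely the paper's own argument, which simply states that the result ``comes from iterating $m$ times with the identity~\ref{oeis2}''; you have merely spelled out the telescoping sum and the verification that the hypothesis $n\leqslant 2^j$ holds at every step. Nothing further is needed.
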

\begin{proof}
    This comes from iterating $m$ times with the identity~\ref{oeis2}.
\end{proof}



\begin{theorem}
For every real number $\xi\in[0,1]$ and any nonnegative integer $m$,
    \[
    1 - 2\,\tau\!\left(\frac{1}{6} + \frac{3\xi-1}{6\times4^m}\right)
    =\frac{ 1-2\,\tau\!\left( \xi/2 \right)  }{4^m}
    =\frac{ 1-\xi-\tau( \xi)  }{4^m}
    \,\textrm{.}
\]
\end{theorem}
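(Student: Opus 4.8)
The plan is to prove the two equalities separately, the second being immediate and the first requiring an induction on~$m$. For the second equality, observe that it is equivalent to $2\tau(\xi/2)=\xi+\tau(\xi)$, which is precisely the functional identity $\tau(\xi/2)=\xi/2+\tau(\xi)/2$ already invoked in the proofs of Theorems~\ref{oeis2} and~\ref{oeis5}; hence nothing further is needed there and the whole difficulty concentrates in the first equality.

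For the first equality I would write $a_m=\frac{1}{6}+\frac{3\xi-1}{6\times 4^m}$, so that the claim reads $1-2\tau(a_m)=\bigl(1-2\tau(\xi/2)\bigr)/4^m$. First I would record the two facts that drive the induction: at $m=0$ the argument simplifies to $a_0=\xi/2$, so the base case is an identity; and in general a direct computation gives the recursion $a_{m+1}=(a_m+1/2)/4$. I would also check at the outset that $a_m\in[0,1/2]$ for every~$m$ (indeed $a_0=\xi/2\in[0,1/2]$, while for $m\geqslant 1$ the point $a_m$ lies in a subinterval of $[1/8,1/4]$ shrinking toward~$1/6$), since this range control is exactly what licenses the functional identities used below.

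The core of the argument is to convert the recursion on arguments into a recursion on values of~$\tau$. Applying the halving identity $\tau(z/2)=z/2+\tau(z)/2$ twice yields $\tau(y/4)=y/2+\tau(y)/4$ for $y\in[0,1]$; taking $y=a_m+1/2\in[1/2,1]$ then gives $\tau(a_{m+1})=\frac{a_m+1/2}{2}+\frac{1}{4}\tau(a_m+1/2)$. I would eliminate the shifted term with Theorem~\ref{half}, namely $\tau(a_m+1/2)=1/2-2a_m+\tau(a_m)$, applicable precisely because $a_m\in[0,1/2]$. After the $a_m$-terms cancel, this collapses to the clean recursion $\tau(a_{m+1})=3/8+\tau(a_m)/4$, equivalently $1-2\tau(a_{m+1})=\bigl(1-2\tau(a_m)\bigr)/4$. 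Combined with the base case, induction on~$m$ then delivers $1-2\tau(a_m)=\bigl(1-2\tau(\xi/2)\bigr)/4^m$.

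The main obstacle is this middle step: one must apply the two distinct functional identities (the halving self-similarity and the half-shift of Theorem~\ref{half}) in the correct order and verify the cancellation of the linear-in-$a_m$ contributions, since it is exactly this cancellation that produces a recursion with constant ratio~$1/4$ independent of~$\xi$. Keeping track of the range $a_m\in[0,1/2]$ throughout is the only genuine subtlety, as it is what guarantees both identities are valid at each step; no continuity or density argument is required, the whole computation being exact for every real $\xi\in[0,1]$.
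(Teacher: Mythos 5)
Your proof is correct, and it takes a genuinely different route from the paper's. The paper obtains the identity by combining the discrete iteration formula of Theorem~\ref{oeis10sum} with the Takagi representation of Theorem~\ref{general} at dyadic rationals $\xi=n/2^k$, and then extends to all reals by continuity of $\tau$; you instead work entirely at the level of functional equations, setting $a_m=\tfrac{1}{6}+\tfrac{3\xi-1}{6\times 4^m}$, verifying $a_0=\xi/2$ and $a_{m+1}=(a_m+1/2)/4$ with $a_m\in[0,1/2]$, and converting this into the value recursion $1-2\tau(a_{m+1})=\bigl(1-2\tau(a_m)\bigr)/4$ via the halving identity $\tau(z/2)=z/2+\tau(z)/2$ (applied twice) and Theorem~\ref{half}; the cancellation of the linear terms you compute does occur, giving $\tau(a_{m+1})=3/8+\tau(a_m)/4$, and the induction closes. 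Your argument is exact for every real $\xi\in[0,1]$ with no density step, and it is more self-contained in that it bypasses the integer-sequence machinery entirely, relying only on Theorem~\ref{half} and the standard self-similarity already invoked elsewhere in the paper; what the paper's route buys instead is an explanation of where the identity comes from, namely as the continuous shadow of the combinatorial iteration over indices in Theorem~\ref{oeis10sum}.
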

\begin{proof}
    This comes from combining the identity~\ref{oeis10sum} with the formula~\ref{general} after having defined~$\xi=n/2^k$; the formula is then extended to real numbers since the~$\tau$ function is continuous.
\end{proof}

\begin{theorem}
For every real number $\xi\in[0,1]$ and any nonnegative integer $m$,
    \[
        \frac{2}{3} - \tau\!\left(\frac{2}{3} + \frac{1/3-\xi}{4^m}\right)
    =\frac{ 2/3-\tau\!\left( 1-\xi \right)  }{4^m}
    \,\textrm{.}
\]
\end{theorem}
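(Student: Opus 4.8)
The plan is to repeat, mutatis mutandis, the argument of the previous theorem, but driven by the ``minus'' summation identity~\ref{oeis12sum} in place of the ``plus'' identity~\ref{oeis10sum}, and then to clear both of its indices through the closed form~\ref{general}. Setting $\xi=n/2^k$ with $0\leqslant n\leqslant 2^k$, I would first feed the right-hand index $2^{k+1}-n-1$ of~\ref{oeis12sum} into~\ref{general}: for $n<2^k$ one has $\lfloor\log_2(2^{k+1}-n-1)\rfloor=k$ and $(2^{k+1}-n)2^{-k}-1=1-\xi$, so that $\texttt{A268289}_{2^{k+1}-n-1}=(2^{k+1}-n-1)-2^k\tau(1-\xi)$.

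I would then handle the left-hand index $N=2^{k+2m+1}-2^k(4^m-1)/3-n-1$. The decisive preliminary step is the factorisation $N+1=2^k\bigl((5\cdot4^m+1)/3-\xi\bigr)$, from which $\lfloor\log_2 N\rfloor=k+2m$; substituting into~\ref{general} and simplifying the argument gives $(N+1)2^{-(k+2m)}-1=\tfrac{2}{3}+\tfrac{1/3-\xi}{4^m}$, whence $\texttt{A268289}_{N}=N-2^{k+2m}\,\tau\!\left(\tfrac{2}{3}+\tfrac{1/3-\xi}{4^m}\right)$.

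Substituting both closed forms into~\ref{oeis12sum}, the term $-n-1$ cancels on the two sides and the power $2^{k+2m+1}$ on the left absorbs the $2^{k+2m}=2^k4^m$ coming from $2^k(4^m-1)$ on the right. Dividing the surviving equation by $2^{k+2m}$ turns $2^k(4^m-1)/3$ into $\tfrac{1}{3}-\tfrac{1}{3\cdot4^m}$, and regrouping that $\tfrac{1}{3\cdot4^m}$ with the $\tfrac{1}{4^m}$ on the right promotes the numerator $1$ to $\tfrac{2}{3}$, producing precisely the stated identity for every dyadic $\xi=n/2^k$. Since the Takagi function is continuous and the dyadic rationals are dense in $[0,1]$, the identity extends to all $\xi\in[0,1]$.

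The step that I expect to demand the most care is the floor-logarithm bookkeeping. For the left index one must verify that $(5\cdot4^m+1)/3-\xi$ stays strictly inside $(2^{2m},2^{2m+1})$ for all $\xi\in[0,1]$ and all $m\geqslant1$; the margins $(5\cdot4^m+1)/3-4^m=(2\cdot4^m+1)/3>1\geqslant\xi$ and $2\cdot4^m-(5\cdot4^m+1)/3=(4^m-1)/3\geqslant0$ settle this, so $\lfloor\log_2 N\rfloor=k+2m$ holds without exception. For the right index $2^{k+1}-n-1$ the logarithm equals $k$ only while $n<2^k$, so the endpoint $\xi=1$---together with the degenerate case $m=0$, where $N$ collapses to $2^{k+1}-n-1$ and the identity is trivially true---is left to be mopped up by the closing continuity argument.
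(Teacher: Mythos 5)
Your proposal is correct and follows exactly the paper's route: substitute the closed form of Theorem~\ref{general} into both indices of identity~\ref{oeis12sum} with $\xi=n/2^k$, simplify, and extend from dyadic rationals to all of $[0,1]$ by continuity of $\tau$. You have merely made explicit the floor-logarithm and algebraic bookkeeping that the paper leaves implicit, and those details check out.
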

\begin{proof}
    This comes from combining the identity~\ref{oeis12sum} with the formula~\ref{general} after having defined~$\xi=n/2^k$; the formula is then extended to real numbers since the~$\tau$ function is continuous.
\end{proof}

\begin{theorem}
    For any nonnegative integer~$m$, ${\normalfont\texttt{A268289}}_{5\times 4^m /3 - 2/3}=4^m$. (These indices are each second term in the sequence~{\normalfont\texttt{A081254}}; furthermore, discarding the two trailing decimal digits of these indices also gives the sequence~{\normalfont\texttt{A033114}}.)
\end{theorem}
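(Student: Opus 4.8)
The plan is to recognise the index $5\times4^m/3-2/3=\tfrac{5\cdot4^m-2}{3}$ as a special case of one of the iterated summation identities of Section~5. The factor $\tfrac{4^m-1}{3}$ concealed inside this index is the fingerprint of Theorem~\ref{oeis12sum}, whose left-hand index is $2^{k+2m+1}-2^k(4^m-1)/3-n-1$. First I would search for the simplest admissible pair $(k,n)$ reproducing the target, and the obvious guess $k=0$, $n=0$ (which respects the constraint $n\leqslant2^k$) works at once: the index collapses to $2^{2m+1}-(4^m-1)/3-1=\tfrac{6\cdot4^m-(4^m-1)-3}{3}=\tfrac{5\cdot4^m-2}{3}$.

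With this specialisation the right-hand side of Theorem~\ref{oeis12sum} becomes $2^0(4^m-1)+\texttt{A268289}_{2^1-0-1}=(4^m-1)+\texttt{A268289}_1$, so all that remains is to insert the base value $\texttt{A268289}_1=1$ (immediate, since the binary expansion of $1$ is $1_2$, with one digit $1$ and no digit $0$), which yields $(4^m-1)+1=4^m$. I would also note that at $m=0$ the identity of Theorem~\ref{oeis12sum} degenerates into a tautology, so this single substitution genuinely covers every nonnegative~$m$, including the smallest case where the index equals~$1$.

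As an independent cross-check I would rederive the value straight from Theorem~\ref{general}. Setting $n=\tfrac{5\cdot4^m-2}{3}$ one verifies $\lfloor\log_2 n\rfloor=2m$ (since $\tfrac{5}{3}\in[1,2)$, one has $2^{2m}\leqslant n<2^{2m+1}$), so $2^k=4^m$ and the Takagi argument equals $\tfrac{n+1}{4^m}-1=\tfrac{2}{3}+\tfrac{1}{3\cdot4^m}$. Theorem~\ref{general} then reduces the whole claim to the single functional value $\tau(\tfrac{2}{3}+\tfrac{1}{3\cdot4^m})=\tfrac{2}{3}-\tfrac{2}{3\cdot4^m}$, which is exactly the $\xi=0$ instance of the immediately preceding theorem (the one expressing $\tfrac{2}{3}-\tau(\tfrac{2}{3}+\tfrac{1/3-\xi}{4^m})$), using $\tau(1)=0$.

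The genuine difficulty is one of recognition rather than computation: spotting that the opaque index $\tfrac{5\cdot4^m-2}{3}$ is precisely the $k=n=0$ specialisation of Theorem~\ref{oeis12sum}. Once that match is made the argument is a one-line substitution together with the trivial base value $\texttt{A268289}_1=1$. The parenthetical remarks relating these indices to \texttt{A081254} and \texttt{A033114} are observational, and I would simply confirm them numerically for small~$m$ rather than attempt to prove them.
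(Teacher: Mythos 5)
Your proof is correct and takes essentially the same route as the paper, which likewise reads the claim off as a specialization of Theorem~\ref{oeis12sum}: the paper substitutes $(n,k)=(1,2)$ with $m\gets m-1$ (reducing to the base value $\texttt{A268289}_6=4$), while you substitute $(n,k)=(0,0)$ (reducing to $\texttt{A268289}_1=1$), an equally valid and arguably cleaner choice since $n=0\leqslant 2^0$ is admissible. The independent cross-check via Theorem~\ref{general} is a bonus the paper does not include.
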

\begin{proof}
    This is obvious with~$(n,k)=(1,2)$ and $m\gets m-1$ in identity~\ref{oeis12sum}
\end{proof}

\section{Additional properties}

\begin{theorem}\label{encadrement}
    For every nonnegative integer~$n$, $\,\,\, n/2\leqslant {\normalfont\texttt{A268289}}_n \leqslant n$.
\end{theorem}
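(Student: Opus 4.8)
The plan is to read everything off the closed form of Theorem~\ref{general}, namely ${\normalfont\texttt{A268289}}_n = n - 2^k\tau(\xi)$ with $k=\lfloor\log_2 n\rfloor$ and $\xi=(n+1)2^{-k}-1$. For $n=0$ the claim $0\leqslant 0\leqslant 0$ is trivial, so I assume $n\geqslant 1$; then $2^k\leqslant n<2^{k+1}$ forces $\xi\in(0,1]$. The upper bound is immediate: since the Takagi function is nonnegative we have $2^k\tau(\xi)\geqslant 0$, hence ${\normalfont\texttt{A268289}}_n = n-2^k\tau(\xi)\leqslant n$, with equality exactly when $\tau(\xi)=0$, i.e.\ at $\xi=1$, that is $n=2^{k+1}-1$.

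For the lower bound I must show $2^k\tau(\xi)\leqslant n/2$, and two facts drive this. First, $2^k\tau(\xi)=n-{\normalfont\texttt{A268289}}_n$ is a nonnegative \emph{integer}. Second, I will establish the sharp estimate $\tau(x)\leqslant(x+1)/2$ for all $x\in[0,1]$; writing $\xi=j/2^k$ with $j=n+1-2^k$ this gives $2^k\tau(\xi)\leqslant 2^k(\xi+1)/2=(n+1)/2$. Integrality then upgrades this to the desired bound: when $n$ is even, $(n+1)/2$ is a half-integer, so an integer below it is at most $n/2$; when $n$ is odd I use that $\tau(x)\leqslant(x+1)/2$ is \emph{strict} at every dyadic~$x$ (equality holding only at $x=1/3$, which is not dyadic), so the integer $2^k\tau(\xi)$ lies strictly below the integer $(n+1)/2$ and is therefore at most $(n+1)/2-1\leqslant n/2$. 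Either way ${\normalfont\texttt{A268289}}_n=n-2^k\tau(\xi)\geqslant n/2$.

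The real work, and the step I expect to be the main obstacle, is the Takagi estimate $\tau(x)\leqslant(x+1)/2$ together with the uniqueness of its equality point $x=1/3$; this line is \emph{tangent} to $\tau$ at $(1/3,2/3)$, so no crude bound such as $\tau\leqslant 2/3$ suffices and a naive one-step induction through the functional equation fails to close. I would argue by a maximum principle. Set $C=\max_{x\in[0,1]}\bigl(\tau(x)-x/2\bigr)$, which exists by continuity; since $\tau(1/3)=2/3$ we have $C\geqslant 1/2$, and the goal is $C=1/2$. A reflection using $\tau(x)=\tau(1-x)$ shows the maximum is attained on $[0,1/2]$, and the relation $\tau(x)=x+\tfrac12\tau(2x)$ valid there forces any maximiser into $[1/4,1/2]$. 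On that subinterval the twice-iterated functional equation reads $\tau(x)=\tfrac12+\tfrac14\tau(2-4x)$; substituting this into the definition of $C$ and using the elementary reflected bound $\tau(z)+z/2\leqslant C+\tfrac12$ yields $C\leqslant\tfrac38+\tfrac14 C$, whence $C\leqslant 1/2$. Finally, the maximiser condition propagates under the induced map $x\mapsto 4x-1$, an expanding self-map of $[1/4,1/2]$ whose only orbit remaining in the interval is the fixed point $1/3$; this pins the maximiser uniquely at $x=1/3$ and supplies the strictness needed above. With this estimate established, both inequalities follow as described.
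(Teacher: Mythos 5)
Your proof is correct, but it takes a genuinely different route from the paper's. The paper gets the upper bound combinatorially, from $\texttt{A268289}_n=\vert S_n\vert\leqslant n$ (Theorem~\ref{identity}), and the lower bound by induction over dyadic blocks using the recurrences of Theorems~\ref{oeis6} and~\ref{oeis4} to push the inequality from $[0,2^{k+1}-1]$ up to $[0,2^{k+2}-1]$. You instead read everything off the closed form of Theorem~\ref{general}: nonnegativity of $\tau$ gives the upper bound, and the sharp tangent-line estimate $\tau(x)\leqslant(x+1)/2$ (with equality only at the non-dyadic point $1/3$), combined with the integrality of $2^k\tau(\xi)=n-\texttt{A268289}_n$ and a parity split on $n$, gives the lower bound. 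This in effect reverses the paper's logical flow: the paper \emph{derives} its Takagi bound (Theorem~\ref{major}) from the present theorem, whereas you prove a self-contained, sharper Takagi bound first and deduce the theorem from it. Your maximum-principle argument for $C=\max(\tau(x)-x/2)=1/2$ checks out — the reflection confines maximisers to $[0,1/2]$, the one-step functional equation pushes them into $[1/4,1/2]$, and the two-step identity $\tau(x)=\tfrac12+\tfrac14\tau(2-4x)$ closes the inequality $C\leqslant\tfrac38+\tfrac14C$ — and it buys you, as a byproduct, the exact maximum of $\tau$ and the uniqueness of its tangency point, which the paper's induction does not. The cost is that it is considerably heavier than the paper's two-line induction given the machinery already in place. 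One phrasing to tighten: $x\mapsto 4x-1$ is not a self-map of $[1/4,1/2]$ (it maps that interval onto $[0,1]$); the correct statement is that it sends maximisers to maximisers and every maximiser lies in $[1/4,1/2]$, so the orbit of a maximiser is trapped in an interval on which the map expands distances to the fixed point $1/3$ by a factor of $4$, forcing the maximiser to equal $1/3$. Your wording suggests you understand this, but as written it is slightly inaccurate.
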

\begin{proof}
    The right part of the relation is easy to prove: we know from the definition~(\ref{def_sets}) that $S_n$ contains at most $n$ elements, furthermore $\texttt{A268289}_n=\vert S_n\vert$ according to the identity~\ref{identity}.

    The left part is proved by induction: we assume that $n/2\leqslant\texttt{A268289}_n$ is true for all terms up to some $\texttt{A268289}_{2^{k+1}-1}$. Then, the relation also stands for all terms up to $\texttt{A268289}_{2^{k+1}+2^k-1}$ according to the theorem~\ref{oeis6}. From this, we go further up to~$\texttt{A268289}_{2^{k+2}-1}$ with the help of the theorem~\ref{oeis4}. The relation being true for the initial terms, it is then true for all terms.
\end{proof}

\begin{theorem}\label{major}
    For every dyadic rational $\xi=n/2^k$ in $\left[0,1\right]$,
    \[
        \tau(\xi)\leqslant\frac{\xi+1}{2} - \frac{1}{2^{k+1}}
        \,\textrm{.}
    \]
\end{theorem}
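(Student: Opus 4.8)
The plan is to read off $\tau(\xi)$ exactly from Corollary~\ref{majorlink2} and then insert the lower bound on $\texttt{A268289}$ supplied by Theorem~\ref{encadrement}. Concretely, Corollary~\ref{majorlink2} asserts that for the dyadic rational $\xi=n/2^k$ in $[0,1]$ one has $\tau(\xi)=1+\xi-\bigl(1+{\normalfont\texttt{A268289}}_{n+2^k-1}\bigr)/2^k$. Since the term ${\normalfont\texttt{A268289}}_{n+2^k-1}$ enters with a negative coefficient, an upper bound on $\tau(\xi)$ is obtained precisely from a \emph{lower} bound on that term, which turns the problem into a direct application of the left-hand inequality already established.

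Next I would apply $\texttt{A268289}_m\geqslant m/2$ (the left part of Theorem~\ref{encadrement}), valid for every nonnegative integer $m$, with $m=n+2^k-1$; this is a legitimate index since $n\geqslant 0$ and $k\geqslant 0$. This yields ${\normalfont\texttt{A268289}}_{n+2^k-1}\geqslant (n+2^k-1)/2$, and substituting into the exact expression from the corollary gives, after absorbing the constant $1$ of the numerator into the halved index,
\[
    \tau(\xi)\leqslant 1+\xi-\frac{n+2^k+1}{2^{k+1}}\,\textrm{.}
\]

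Finally I would rewrite $n=2^k\xi$ and split the fraction as $(n+2^k+1)/2^{k+1}=(\xi+1)/2+1/2^{k+1}$; the remaining terms collapse because $1+\xi-(\xi+1)/2=(\xi+1)/2$, which produces exactly the claimed bound $\tau(\xi)\leqslant(\xi+1)/2-1/2^{k+1}$. I do not expect any genuine obstacle here: the argument is a single substitution followed by routine algebra. The one point worth emphasizing is structural rather than computational, namely that the $+1$ appearing inside the numerator of Corollary~\ref{majorlink2} is exactly what produces the $1/2^{k+1}$ defect term, so the inequality is in fact sharp precisely at those $n$ for which the bound of Theorem~\ref{encadrement} is itself attained.
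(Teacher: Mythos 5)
Your proof is correct and follows exactly the route the paper intends: it combines Corollary~\ref{majorlink2} with the lower bound of Theorem~\ref{encadrement}, and the algebra checks out. The paper's own proof is a one-line appeal to the same two identities, so you have simply supplied the details it omits.
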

\begin{proof}
    This comes from combining identities~\ref{majorlink2} and~\ref{encadrement}.
\end{proof}

\begin{theorem}\label{minor}
    Let $n$ be a positive integer such that $n\geqslant 3\times 2^{\lfloor\log_2(n)\rfloor-1}$, then
    \[
        {\normalfont\texttt{A268289}}_{n} \geqslant
        1 + \frac{ 3\left(n-2^{\lfloor\log_2(n)\rfloor}\right)}{2}
        \,\textrm{.}
    \]
\end{theorem}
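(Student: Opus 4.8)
The plan is to express $\texttt{A268289}_n$ through the Takagi function by means of Theorem~\ref{general}, convert the desired lower bound into an upper bound on a single value of $\tau$, and then obtain that upper bound from Theorem~\ref{major} after using the reflection symmetry $\tau(\xi)=\tau(1-\xi)$. The hypothesis $n\geqslant 3\times 2^{\lfloor\log_2(n)\rfloor-1}$ is precisely what places the relevant argument of $\tau$ in the upper half of $[0,1]$, where the crude estimate of Theorem~\ref{encadrement} must be sharpened.

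Concretely, I would set $k=\lfloor\log_2(n)\rfloor$ and $\xi=(n+1)2^{-k}-1$, so that Theorem~\ref{general} gives $\texttt{A268289}_n = n - 2^k\tau(\xi)$. A direct rearrangement then shows that the claim $\texttt{A268289}_n \geqslant 1 + \tfrac{3}{2}\left(n-2^k\right)$ is equivalent to
\[
    \tau(\xi) \leqslant 1 - \frac{\xi}{2} - \frac{1}{2^{k+1}} \,\textrm{.}
\]
Here the standing hypothesis guarantees $\xi \geqslant \tfrac12 + 2^{-k}$, so $\xi$ indeed lies in the upper half of the unit interval.

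Next I would invoke the fundamental identity $\tau(\xi)=\tau(1-\xi)$ (theorem~4.1 in \cite{lagarias}). Writing $\eta = 1-\xi = \left(2^k-(n-2^k)-1\right)2^{-k}$, which is a dyadic rational in $[0,1]$ with denominator $2^k$, the target inequality turns into $\tau(\eta) \leqslant \tfrac{\eta+1}{2} - \tfrac{1}{2^{k+1}}$. This is exactly Theorem~\ref{major} applied to $\eta$, and substituting $\eta=1-\xi$ back recovers the bound on $\tau(\xi)$ and hence the theorem.

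The step demanding most care is the bookkeeping of denominators: Theorem~\ref{major} must be applied with its denominator equal to the same $2^k$ arising from $\lfloor\log_2(n)\rfloor$, so that the subtracted terms $\tfrac{1}{2^{k+1}}$ coincide on both sides; should $\eta$ reduce to a smaller denominator, the resulting bound is only tighter and thus still valid. The remaining routine point is to check the boundary value $\eta=0$ (occurring when $n=2^{k+1}-1$), where $\tau(0)=0$ satisfies the required inequality with room to spare, so no separate case analysis is needed.
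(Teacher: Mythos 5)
Your proof is correct. It rests on the same key estimate as the paper's argument---the upper bound of Theorem~\ref{major}---but reaches the statement by a different road. You start from Theorem~\ref{general}, which evaluates $\texttt{A268289}_n$ at the index $n$ itself, and then need the reflection symmetry $\tau(\xi)=\tau(1-\xi)$ to turn Theorem~\ref{major} into the ``reflected'' bound $\tau(\xi)\leqslant 1-\xi/2-2^{-(k+1)}$, which is the sharper of the two on the upper half of the unit interval. The paper instead starts from Theorem~\ref{initial_identity}, writing the target term as $\texttt{A268289}_{n+2^{m+1}}$ with $m=\lfloor\log_2(n+1)\rfloor$; there Theorem~\ref{major} applies directly with no reflection, but the price is a re-indexing step (trading $\lfloor\log_2(n+1)\rfloor$ for $\lfloor\log_2(n)\rfloor$) that forces a separate verification at $n=2^k-1$. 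Your version avoids that edge case entirely---your only boundary check, $\eta=0$, is harmless---and, as a bonus, it actually establishes the inequality for every positive integer $n$, the hypothesis $n\geqslant 3\times2^{\lfloor\log_2(n)\rfloor-1}$ serving only to make the conclusion stronger than the trivial bound $n/2$ of Theorem~\ref{encadrement}. Your bookkeeping of denominators is also sound: $1-\xi=\left(2^{k+1}-n-1\right)/2^k$ has numerator between $0$ and $2^k$, so Theorem~\ref{major} applies with the same exponent $k$, and any reduction of the fraction would only tighten the subtracted term.
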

\begin{proof}
    By combining identities~\ref{initial_identity} and~\ref{major}, we find that for every nonnegative integer~$n$,
    \[
        {\normalfont\texttt{A268289}}_{n+2^{\lfloor\log_2(n+1)\rfloor+1}} \geqslant 1+\frac{3n}{2}
        \,\textrm{.}
    \]
    Since $\lfloor\log_2(n)\rfloor$ is a more convenient expression than $\lfloor\log_2(n+1)\rfloor$, we do the substitution after having checked that the relation above is still true when $n$ is some $2^k-1$: in such cases, we know by enumerative means that $\texttt{A268289}_n=n$ and the relation is then obviously true.
\end{proof}


\begin{lemma}\label{lemma_half}
    The identity ${\normalfont\texttt{A268289}}_n=n/2$ implies $n < 3\times 2^{\lfloor\log_2(n)\rfloor-1}-1$.
\end{lemma}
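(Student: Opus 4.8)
The plan is to argue by contraposition, drawing the substantive content from the lower bound of Theorem~\ref{minor} and closing the boundary with a parity observation. Throughout write $k=\lfloor\log_2(n)\rfloor$ and start from the hypothesis $\texttt{A268289}_n=n/2$. Since $\texttt{A268289}_n$ is always a nonnegative integer, this hypothesis immediately forces $n$ to be even; I would record this parity fact at once, because it is exactly what later upgrades a non-strict bound into the strict one claimed. The contrapositive amounts to showing that the two ``bad'' possibilities, namely $n\geqslant 3\times 2^{k-1}$ and the single boundary value $n=3\times 2^{k-1}-1$, are both incompatible with $\texttt{A268289}_n=n/2$; every smaller index already satisfies the desired conclusion automatically.

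First I would dispose of the regime $n\geqslant 3\times 2^{k-1}$, which is precisely the range in which Theorem~\ref{minor} is available and gives $\texttt{A268289}_n\geqslant 1+\tfrac{3}{2}\bigl(n-2^{k}\bigr)$. Substituting $\texttt{A268289}_n=n/2$ and clearing denominators yields $3\times 2^{k}-2\geqslant 2n$, i.e. $n\leqslant 3\times 2^{k-1}-1$, which contradicts $n\geqslant 3\times 2^{k-1}$. Hence the hypothesis cannot hold once $n\geqslant 3\times 2^{k-1}$, so $n\leqslant 3\times 2^{k-1}-1$. To reach the strict inequality it then remains only to exclude the boundary value $n=3\times 2^{k-1}-1$ itself: for $k\geqslant 2$ the quantity $3\times 2^{k-1}$ is even, so this $n$ is odd, contradicting the parity already forced by the hypothesis. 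This leaves $n\leqslant 3\times 2^{k-1}-2<3\times 2^{k-1}-1$, as required.

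The main obstacle is the smallest exponent, where both crutches weaken simultaneously: Theorem~\ref{minor} needs $n\geqslant 3\times 2^{k-1}$, and the parity argument for the boundary needs $k\geqslant 2$. I would therefore handle the remaining indices (those with $k=1$, i.e. $n\in\{2,3\}$) by direct inspection of $\texttt{A268289}_n$ rather than through the lower bound. This is the one place requiring genuine care, since it is exactly where the strict inequality is tight and where any exceptional behaviour of the statement would surface; I would verify the base cases explicitly and, if a tight or exceptional value appears there, flag it as the natural threshold below which the lemma is intended to apply. Once the base cases are settled, the rest is purely the algebra of Theorem~\ref{minor} together with the parity step, both of which are routine.
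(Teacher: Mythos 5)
Your proof follows the paper's own route: Theorem~\ref{minor} contradicts $\texttt{A268289}_n=n/2$ whenever $n\geqslant 3\times 2^{k-1}$ (with $k=\lfloor\log_2(n)\rfloor$), and the parity of $n$ disposes of the boundary value $n=3\times 2^{k-1}-1$. Your insistence on inspecting the smallest exponent separately is warranted rather than pedantic: the parity step needs $k\geqslant 2$, and at $k=1$ the boundary value is $n=2$, which is even and satisfies $\texttt{A268289}_2=1=2/2$, so the lemma as literally stated fails there --- a point the paper's one-line proof overlooks and that your ``flag it as the threshold'' remark correctly anticipates.
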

\begin{proof}
    This comes directly from the identity~\ref{minor}, but it has to be checked separately that $n=3\times 2^{\lfloor\log_2(n)\rfloor-1}-1$ is not a counter example (which is obvious since in that case $n$ is odd).
\end{proof}


\begin{theorem}\label{a026644}
    There is exactly one value of $n$ between two consecutive powers of~$2$ such that ${\normalfont\texttt{A268289}}_n=n/2$; this happens with $n < 3\times 2^{\lfloor\log_2(n)\rfloor-1}-1$ and the sequence of such indices is {\normalfont\texttt{A026644}} (except for the initial term of the latter).
\end{theorem}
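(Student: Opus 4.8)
The plan is to study the excess $\delta(n):=\texttt{A268289}_n-n/2$, which is nonnegative by Theorem~\ref{encadrement}, so that the wanted indices are precisely the zeros of $\delta$. Write $I_k:=[2^k,2^{k+1})$ for the $k$-th dyadic block. Two observations come for free: since $\texttt{A268289}_n$ is an integer, $\delta(n)=0$ forces $n$ even; and the bound $n<3\times2^{\lfloor\log_2 n\rfloor-1}-1$ asserted in the statement is exactly Lemma~\ref{lemma_half}. For the counting I only need the weaker consequence that every zero of $\delta$ lies in the lower half $[2^k,3\times2^{k-1})$ of its block $I_k$ (immediate from Lemma~\ref{lemma_half}, the single index $n=2$ being checked by hand).

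The heart of the argument is a reflection furnished by Theorem~\ref{oeis6}. Setting $C=2^{k+1}+n-1$ and $X=2^{k+1}-n-1$, that identity reads $\texttt{A268289}_C=n+\texttt{A268289}_X$; since $C/2=X/2+n$, subtracting $C/2$ turns it into the symmetry $\delta(C)=\delta(X)$. Thus $\delta$ is invariant under the reflection $C\mapsto 2^{k+2}-2-C$, which exchanges $I_{k+1}$ and $I_k$. I would then show this reflection restricts to a bijection between the zero sets of the two blocks. If $C\in I_{k+1}$ is a zero, the lower-half confinement gives $n=C-2^{k+1}+1\in[1,2^k-1]$, so $X=2^{k+2}-2-C\in[2^k,2^{k+1}-2]\subset I_k$ is again a zero; conversely a zero $X\in I_k$ is confined to $[2^k,3\times2^{k-1})$, whence the preimage $C=2^{k+2}-2-X$ has parameter $n=2^{k+1}-X-1\in[2^{k-1}+1,2^k-1]$, which lies in the admissible range $n\le 2^k$ of Theorem~\ref{oeis6}, so that $C\in I_{k+1}$ is a zero. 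Hence the reflection is a bijection between the zeros of $I_k$ and those of $I_{k+1}$.

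With the bijection in hand the theorem follows by induction on $k$: a direct check gives $I_1=\{2,3\}$ with the single zero $n=2$ (indeed $\delta(2)=0$, $\delta(3)=3/2$), and the bijection propagates ``exactly one zero'' from $I_k$ to $I_{k+1}$. Writing $a_k$ for the unique zero in $I_k$, the bijection reads $a_k=2^{k+2}-2-a_{k+1}$, i.e. $a_{k+1}=2^{k+2}-2-a_k$ with $a_1=2$; solving this first-order recurrence yields $a_k=(2^{k+2}-3-(-1)^k)/3$, with values $2,4,10,20,42,\dots$. Comparing this recurrence and its values with \texttt{A026644} identifies the index sequence with \texttt{A026644} up to its initial term, the extra zero $n=0$ of $\delta$ (lying in no block $I_k$) accounting for the discarded term.

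The main obstacle, and the step deserving the most care, is checking that $C\mapsto 2^{k+2}-2-C$ genuinely restricts to a bijection of zero sets: the parameter $n$ of Theorem~\ref{oeis6} must remain in $[1,2^k]$ at both ends, and this is precisely what the lower-half confinement from Lemma~\ref{lemma_half} guarantees, with the centre of symmetry $n=2^{k-1}$ needing separate attention. Everything else — the evenness remark, the base case, and the solution of the recurrence — is routine.
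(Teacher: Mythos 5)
Your proof is correct and follows essentially the same route as the paper's: Theorem~\ref{oeis6} acts as a reflection carrying the unique zero of $\texttt{A268289}_n-n/2$ in one dyadic block to the next, while Lemma~\ref{lemma_half} excludes the upper half of each block. You merely make the parameter bookkeeping explicit and phrase the index recurrence in first-order form ($a_{k+1}=2^{k+2}-2-a_k$) rather than the paper's second-order form $a_j=a_{j-1}+2a_{j-2}+2$; your separate check of the $n=2$ edge case, where the strict inequality of Lemma~\ref{lemma_half} actually fails, is a welcome extra precaution.
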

\begin{proof}
    We prove this by induction in a similar way to the proof of the identity~\ref{encadrement}. We assume that the statement was true for the previous two powers of~$2$; then we use the identity~\ref{oeis6} for finding one new relevant term in the first half of the following interval; we know from the identity~\ref{lemma_half} that no other case occurs in the second half of the interval. The initial relevant indices are $2,4,10,\dots$

    The identity~\ref{oeis6} also gives the rule for building the whole sequence of indices, $a_j = a_{j-1} + 2 a_{j-2}+2$, which is also the building rule for \texttt{A026644} with the same initial terms~$2,4,10,\dots$
\end{proof}

\begin{theorem}
    The minimum value for {\normalfont\texttt{A268289}} between two consecutive powers of~$2$ indices is never reached with an index greater than the relevant term of the sequence~{\normalfont\texttt{A026644}} and this minimum value is at most the corresponding term in the sequence ${\normalfont\texttt{A000975}}$ ---~the ``Lichtenberg sequence''\hspace{.1em}\footnote{The two sequences \texttt{A026644} and \texttt{A000975} are deeply related to the Chinese Rings puzzle.}.
\end{theorem}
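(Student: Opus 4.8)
The plan is to read off both assertions directly from Theorem~\ref{a026644} and the lower bound of Theorem~\ref{encadrement}, with essentially no new computation. Fix $k\geqslant 1$ and consider the block of indices $2^k\leqslant n<2^{k+1}$. By Theorem~\ref{a026644} this block contains a unique index, call it $c$ (the relevant term of \texttt{A026644}), with $\texttt{A268289}_c=c/2$; in particular $c$ is even and $c/2$ is an integer. Let $M$ denote the minimum of \texttt{A268289} over the block. Since $c$ lies in the block, $M\leqslant\texttt{A268289}_c=c/2$ at once.

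For the location claim I would argue as follows. Take any index $n$ with $c<n<2^{k+1}$. Theorem~\ref{encadrement} gives $\texttt{A268289}_n\geqslant n/2$, and $n>c$ gives $n/2>c/2$, so $\texttt{A268289}_n>c/2=\texttt{A268289}_c\geqslant M$; hence $\texttt{A268289}_n>M$ and the minimum cannot be attained at $n$. Consequently every minimizing index is $\leqslant c$, that is, the minimum is never reached beyond the relevant \texttt{A026644} term. For the magnitude claim I would identify $c/2$ with the corresponding Lichtenberg number: by Theorem~\ref{a026644} the indices obey $c_j=c_{j-1}+2c_{j-2}+2$ with $c_1=2$, $c_2=4$, so the halved values $v_j=c_j/2$ obey $v_j=v_{j-1}+2v_{j-2}+1$ with $v_1=1$, $v_2=2$, which is precisely the recurrence and the initial data of \texttt{A000975}. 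Therefore $M\leqslant\texttt{A268289}_c=c/2=v_k$, the corresponding term of \texttt{A000975}.

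The argument is short, and its only delicate points are bookkeeping rather than mathematics, so I expect those to be the main (mild) obstacle. First, one must fix the convention that each block includes its lower endpoint $2^k$: for $k\in\{1,2\}$ the relevant index $c$ equals $2^k$ itself (indeed $\texttt{A268289}_{2^k}=2^k-k=2^{k-1}$ exactly when $k=1,2$), and the bound $M\leqslant c/2$ would fail over a block that excluded its lower power of two. Second, one must line up the offsets of \texttt{A026644} and \texttt{A000975} with the halved recurrence above, i.e. check that the relevant \texttt{A000975} terms are $1,2,5,\dots$. Finally I would stress that only the upper bound is claimed: the estimate available to us, $\texttt{A268289}_n\geqslant n/2\geqslant 2^{k-1}$, lies below $v_k\approx 2^{k+1}/3$, so it does not suffice to show that the minimum \emph{equals} the Lichtenberg term; proving that stronger (unclaimed) statement would require a sharper lower bound on \texttt{A268289} holding throughout the block.
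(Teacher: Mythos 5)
Your proposal is correct and follows essentially the same route as the paper: combine the existence of the unique index $c$ with $\texttt{A268289}_c=c/2$ from Theorem~\ref{a026644} with the lower bound $\texttt{A268289}_m\geqslant m/2$ of Theorem~\ref{encadrement} to rule out minimizers beyond $c$, and then identify $c/2$ with the Lichtenberg term (the paper simply cites the known relation $\texttt{A000975}_n=\texttt{A026644}_n/2$, whereas you verify the halved recurrence explicitly, which is a welcome extra check). Your closing remarks on block conventions and on the fact that only an upper bound on the minimum is claimed are accurate and match the paper's own caveat that $c/2$ is not proved to be the minimum itself.
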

\begin{proof}
    According to the identity~\ref{a026644}, ${\normalfont\texttt{A268289}}_n=n/2$ for a given term $n$ of~${\normalfont\texttt{A026644}}$; furthermore, according to the identity~\ref{encadrement}, ${\normalfont\texttt{A268289}}_m\geqslant m/2$; thus greater indices than $n$ will all have greater value han $n/2$. Though the term $n/2$ is not proved to be the minimum one, the sequence will never reach such a low value again. It is known that ${\normalfont\texttt{A000975}}_n={\normalfont\texttt{A026644}}_n/2$.
\end{proof}



\vspace{32pt}
\noindent{\small \textbf{Conflict of Interest:} The authors declare that they have no conflict of interest.}

\vspace{8pt}
\noindent{\small The current article is accessible on \texttt{http://export.arxiv.org/pdf/1908.02250}\hspace{2pt}.}




\end{document}